\begin{document}
\begin{frontmatter}
\title{Convergence analysis of a Lagrangian numerical scheme in computing effective diffusivity of 3D time-dependent flows}
		
       \author[uoc]{Zhongjian Wang}
        \ead{zhongjian@statistics.uchicago.edu}
		\author[uci]{Jack Xin}
		\ead{jxin@math.uci.edu}
		\author[hku]{Zhiwen Zhang\corref{cor1}}
		\ead{zhangzw@hku.hk}
		
		\address[uoc]{Department of Statistics, The University of Chicago, Chicago, IL 60637, USA.}
		\address[uci]{Department of Mathematics, University of California at Irvine, Irvine, CA 92697, USA.}
		\address[hku]{Department of Mathematics, The University of Hong Kong, Pokfulam Road, Hong Kong SAR, China.}		
		\cortext[cor1]{Corresponding author}
\begin{abstract}
In this paper, we {\color{black} study the convergence analysis for a robust stochastic structure-preserving Lagrangian numerical scheme in computing effective diffusivity of time-dependent chaotic flows, which are modeled by stochastic differential equations (SDEs).} Our numerical scheme is based on a splitting method to solve the corresponding SDEs in which the deterministic subproblem is discretized using structure-preserving schemes while the random subproblem is discretized using the Euler-Maruyama scheme. 
We obtain a sharp and uniform-in-time convergence analysis for the proposed numerical scheme that allows us to accurately compute long-time solutions of the SDEs. As such, we can compute the effective diffusivity for time-dependent flows. Finally, we present numerical results to demonstrate the accuracy and efficiency of the proposed method in computing effective diffusivity for the time-dependent Arnold-Beltrami-Childress (ABC) flow and Kolmogorov flow in three-dimensional space. \\
\noindent \textit{\textbf{AMS subject classification:}}  35B27, 37A30, 60H35,  65M12, 65M75.  
\end{abstract}
		
\begin{keyword}
Convection-enhanced diffusion; time-dependent chaotic flows; effective diffusivity;  
structure-preserving scheme; ergodic theory; convergence analysis.
\end{keyword}
\end{frontmatter}

\section{Introduction} \label{sec:Introduction}
\noindent
In this paper, we study the convection-enhanced diffusion phenomenon for particles moving in time-dependent chaotic flows, which is defined by the following passive tracer model, i.e., a stochastic differential equation (SDE),
\begin{align}\label{eqn:generalSDED-TDFlow}
\mathrm{d}\textbf{X}(t) = \textbf{v}(t,\textbf{X})\mathrm{d}t + \sigma \mathrm{d}\textbf{w}(t),  \quad  \textbf{X}\in \mathbb{R}^{d},   
\end{align}
where $\textbf{X}(t)=(x_1(t),...,x_d(t))^{T} \in \mathbb{R}^{d}$ is the position of the particle, $\sigma>0$ is the molecular diffusivity, and $\{{\bf w}(t)\}_{t\ge0}$ is the standard $d$-dimensional Browinian motion. The velocity field $\textbf{v}(t,\textbf{x})$ is time-dependent and divergence free, i.e. $\nabla_{\textbf{x}}\cdot \textbf{v}(t,\textbf{x}) = 0$, for all $t\geq 0$. In order to guarantee the existence of the solution to the SDE \eqref{eqn:generalSDED-TDFlow}, we also assume that $\textbf{v}(t,\textbf{x})$ is Lipschitz in $\textbf{x}$. The passive tracer model \eqref{eqn:generalSDED-TDFlow} has many applications in physical and engineering sciences, including atmosphere science, ocean science, chemical engineering, and combustion.

We will study the long-time large-scale behavior of the particle $\textbf{X}(t)$ in the passive tracer model \eqref{eqn:generalSDED-TDFlow}, i.e., whether the motion of the particle  ${\bf X}(t)$ has a long-time diffusive limit. Let $\textbf{X}^{\epsilon}(t)\equiv\epsilon \textbf{X}(t/\epsilon^2)$ denote the rescaled process of \eqref{eqn:generalSDED-TDFlow}. 
We aim to investigate whether $\textbf{X}^{\epsilon}(t)$ converge in law to a Brownian motion with a covariance matrix $D^{E}\in \mathbb{R}^{d\times d}$ as $\epsilon\rightarrow0$, where $D^{E}$ is called the effective diffusivity matrix. The dependence of $D^E$ on the velocity field of the passive tracer model is complicated and highly nontrivial. There are many theoretical works, where the homogenization theory was applied to study the effective diffusivity matrix $D^E$ of the passive tracer model with spatial periodic velocity fields or random velocity fields with short-range correlations; see e.g. \cite{BensoussanLionsPapa:2011,Garnier:97,Oleinik:94,Stuart:08} and references therein. 

For many complicated velocity fields of physical interests, one cannot apply the homogenization theory to {\color{black}compute} the corresponding effective diffusivity matrix $D^E$, or even determine its existence. Therefore, many numerical 
methods were developed to compute $D^E$. These results include, among others, for time-independent Taylor-Green flows, the authors of \cite{StuartZygalakis:09} proposed a stochastic splitting method and calculated the effective diffusivity in the limit of vanishing molecular diffusion. For time-dependent chaotic flows, 
an efficient model reduction method based on the spectral method was developed to compute $D^E$ using the Eulerian framework \cite{JackXinLyu:2017}. The reader is referred to \cite{Majda:99} for an extensive review of many existing mathematical theories and numerical simulations for the passive tracer model with different velocity fields.  

Recently, we developed a robust structure-preserving Lagrangian scheme to compute the effective diffusivity for chaotic and stochastic flows in \cite{WangXinZhang:18}. 
We also obtained a rigorous error estimate for the numerical scheme in \cite{WangXinZhang:18}. Specifically, let $D^{E,num}$ denote the numerical effective diffusivity obtained by our method. We got the error estimate, $||D^{E,num}-D^E||\le C\Delta t+C(T)(\Delta t)^2$, where the computational time $T$ should be greater than the diffusion time (also known as mixing time). This error estimate is not sharp in the sense that the pre-factor $C(T)$ may grow fast with respect to $T$, since the error estimation is based on a Gronwall inequality technique. 
Later, we obtained a sharp convergence rate for our numerical scheme and got rid of the term $C(T)$ in the error estimate in \cite{Zhongjian2018sharp}. However, this result only holds for the passive tracer models with time-independent flows.

In this paper, we aim to obtain a sharp convergence analysis for our numerical scheme in computing the effective diffusivity of passive tracer models in spatial-temporal periodic velocity fields. These types of flow fields 
are well-known for exhibiting chaotic streamlines and have many applications in turbulent diffusion \cite{Majda:99}. Since in this case the velocity field depends on the temporal variable, the generator associated with the stochastic process, i.e. the solution  $\textbf{X}(t)$ in Eq.\eqref{eqn:generalSDED-TDFlow} becomes {\color{black} non-autonomous. The generator is now a parabolic-type operator (see Eq.\eqref{DefinitionGenerator}), instead of an elliptic-type operator that was studied in \cite{Zhongjian2018sharp} when the flows are time-independent. The uniqueness only happens regarding each  time period. Hence the extension of the analysis developed in \cite{Zhongjian2018sharp} to time-dependent flows is not straightforward. We will develop new techniques to overcome this difficulty; see Theorem \ref{thm:main-per} and Lemma \ref{lem:existenceofcellpro} in Section \ref{sec:ConvergenceAnalysis}. We also emphasize that when the flows are time-independent, we can construct the ballistic orbits of the ABC flows and Kolmogorov flows and study their dynamic behaviors. When the flows are time-dependent, however, their streamlines are complicated, which makes it is difficult to construct the orbits of these flows.} 

Though there are several prior works on structure-preserving schemes for ODEs and SDEs, see e.g.{\color{black} \cite{ErnstLubich:06,hong2006multi,tao2010nonintrusive,JanHesthaven2017structure,lelievre_stoltz_2016} and references therein, our work has several novel contributions. The first novelty is the convergence analysis, where we develop new techniques to deal with time-dependent flows. 
To handle the parabolic-type generator, we pile up snapshots of each time step within a single time period together. By viewing the numerical solutions as a Markov process and exploring the ergodicity of the solution process, we succeed in obtaining a sharp convergence analysis for our method in computing the effective diffusivity, where the error estimate does not depend on the computational time. Therefore, we can compute the long-time solutions of passive tracer models without losing accuracy; see Fig.\ref{fig:tdkflowVaryD0VaryEps} and Fig.\ref{fig:tdABCflowVaryD0VaryEps}. If we choose  the Gronwall inequality in the error estimate, we cannot get rid of the exponential growth pre-factor in the error term, which makes the convergence analysis not sharp. Most importantly, our convergence result reveals the equivalence of the definition of effective diffusivity using the Eulerian framework and the Lagrangian framework; see Theorem \ref{thm:convergence}, which is fundamental and important. For 3D time-dependent flow problems, the Eulerian framework has good theoretical value but the Lagrangian framework is computationally accessible.

Another novelty is that the stochastic structure-preserving Lagrangian scheme is robust and quite cheap in computing the long-time solutions of the passive tracer model \eqref{eqn:generalSDED-TDFlow}, especially for problems in three-dimensional space. If one adopts the Eulerian framework to compute the effective diffusivity of the passive tracer model \eqref{eqn:generalSDED-TDFlow}, one needs to solve a convection-diffusion-type cell problem; see Eq.\eqref{CellProblem_EffectiveDiffusivity}. When the molecular diffusivity $\sigma$ is small and/or the dimension of spatial variables is big, say $d=3$, it is exorbitantly expensive to solve the cell problems. As indicated in Eq.\eqref{CellProblem_EffectiveDiffusivity2}, the effective diffusivities depend on the integration of the gradient of the solution to the cell problem. In many cases, e.g. time-dependent ABC flow, the effective diffusivities grows up rapidly as $\sigma$ decreases; see Fig.\ref{fig:tdabcflowresult}. In our Lagrangian approach, we can overcome the difficulties of long-time integration of the SDEs (raised as $\sigma$ decreases) by using robust structure-preserving schemes. However, for the Eulerian approach, one needs to solve a four-dimensional PDE (three variables in space dimension and one variable in the time dimension) and solutions have sharp gradients as the diffusivity decreases, which makes the Eulerian approach for computing effective diffusivities expensive.}

Numerical results show that our Lagrangian scheme is insensitive to the molecular diffusivity $\sigma$ and computational cost linearly depends on the dimension $d$ of spatial variables in the passive tracer models \eqref{eqn:generalSDED-TDFlow}. Thus, we are able to investigate the convection-enhanced diffusion phenomenon for several typical time-dependent chaotic flows of physical interests, including the time-dependent ABC flow and the time-dependent Kolmogorov flow in three-dimensional space. We discover that the maximal enhancement is achieved in the former case, while a submaximal enhancement is observed in the latter case; see Fig.\ref{fig:tdabcflowresult} and Fig.\ref{fig:tdkflowVaryD0OneEps}, respectively. In addition, we find that the level of chaos and the strength of diffusion enhancement seem to compete with each other in the time-dependent ABC flow; see Fig.\ref{fig:tdABCflowVaryFreq}. To the best of our knowledge, our work appears to be the first one in the literature to develop numerical methods to study the convection-enhanced diffusion phenomenon in 3D time-dependent flows. 
 
The rest of the paper is organized as follows. In Section 2, we give the definition of the effective diffusivity matrix using the Eulerian framework and the Lagrangian framework. In Section 3, we propose the stochastic structure-preserving Lagrangian scheme in computing effective diffusivity for the passive tracer model \eqref{eqn:generalSDED-TDFlow}. In Section 4, we provide a sharp convergence analysis for the proposed method based on a probabilistic approach. In addition, we shall show that our method can be used to solve high-dimensional flow problems and the error estimate can be obtained naturally. In Section 5, we present numerical results to demonstrate the accuracy and efficiency of our method. We also investigate the convection-enhanced diffusion phenomenon for time-dependent chaotic flows. Concluding remarks are made in Section 6.

\section{Effective diffusivity  of the passive tracer models} \label{sec:EffectiveDiffusivity} 
%
\noindent
There are two frameworks to define the effective diffusivity of the passive tracer models. We first 
discuss the Eulerian framework. One natural way to study the expectation of the paths for the SDE given by the Eq.\eqref{eqn:generalSDED-TDFlow} is to consider its associated backward Kolmogorov equation \cite{Oksendal:13}. 
Due to the time-dependence nature of the velocity field, we need to deal with a space-time ergodic random flow. Specifically, given  a sufficiently smooth function $\phi(\tau,\textbf{x})$ in $\mathbb{R}\times\mathbb{R}^{d}$, let $u(t,\tau,\textbf{x})=\mathbb{E}\big[\phi(t+\tau,\textbf{X}_{t+\tau})|\textbf{X}_{\tau}=\textbf{x}\big]$ and $\textbf{X}(t)$ be the solution to Eq.\eqref{eqn:generalSDED-TDFlow}. Then, $u(t,\tau,\textbf{x})$ satisfies
the following backward Kolmogorov equation  
\begin{align}\label{BackwardKolmolgorovEquation0}
u_{t}=\mathcal{L}u, \quad u(0,\tau,\textbf{x})=\phi(\tau,\textbf{x}).
\end{align} 
In Eq.\eqref{BackwardKolmolgorovEquation0}, the generator $\mathcal{L}$ is defined as 
\begin{align}\label{DefinitionGenerator}
\mathcal{L}u = \partial_\tau{\color{black}u} + \textbf{v}\cdot \nabla_{\textbf{x}} u + D_{0}\Delta_{\textbf{x}} u,
\end{align}
where $D_0=\sigma^2/2$ is the diffusion coefficient, $\textbf{v}$ is the velocity field, and $\nabla_{\textbf{x}}$ 
and $\Delta_{\textbf{x}}$ denote the gradient operator and Laplace operator, respectively. 
\begin{remark}
Let $\rho(t,\tau,\textbf{x})$ denote the density function of the particle $(t+\tau,\textbf{X}(t+\tau))$ of Eq.\eqref{eqn:generalSDED-TDFlow}. One can define the adjoint operator $\mathcal{L}^{*}$ as $\mathcal{L}^{*}\rho =-\partial_\tau{\color{black}\rho}-\nabla\cdot(\textbf{v} \rho)+D_{0}\Delta\rho$.  Then,  $\rho$	satisfies the Fokker-Planck equation 	$\rho_{t}=\mathcal{L}^{*} \rho$ with the initial density $\rho(t,\tau,\textbf{x})=\rho_0(\tau,\textbf{x})$.
\end{remark}

When $\textbf{v}$ is incompressible (i.e. $\nabla_{\textbf{x}}\cdot \textbf{v}(t,\textbf{x})=0,\ \forall t$), deterministic and space-time periodic in $O(1)$ scale, where we assume the period of $\textbf{v}$ is $1$ in each dimension of the physical and temporal space, the formula for the effective diffusivity matrix is \cite{BensoussanLionsPapa:2011,Stuart:08}
\begin{align}
D^{E} = D_0I +\big\langle \textbf{v}(t,\textbf{x})\otimes \boldsymbol\chi(t,\textbf{x}) \big\rangle_{p},
\label{Def_EffectiveDiffusivity_Euler}
\end{align}
where we have assumed that the fluid velocity $\textbf{v}(t,\textbf{x})$ is smooth and 
the (vector) corrector field $\boldsymbol\chi$ satisfies the cell problem,
\begin{align}
\mathcal{L}\boldsymbol\chi = - \textbf{v}(t,\textbf{y}), \quad (t,\textbf{y})\in \mathbb{T}\times\mathbb{T}^d,
\label{CellProblem_EffectiveDiffusivity}
\end{align}
and $\langle \cdot \rangle_{p} $ denotes temporal and spatial average over $\mathbb{T}\times\mathbb{T}^d$. 
Since $\textbf{v}$ is incompressible, the solution $\boldsymbol\chi$ to the cell problem 
\eqref{CellProblem_EffectiveDiffusivity} is unique up to an additive constant by the Fredholm alternative. By multiplying $\boldsymbol\chi$ to Eq.\eqref{CellProblem_EffectiveDiffusivity}, integrating the corresponding result in $\mathbb{T}\times\mathbb{T}^d$ and using the periodic conditions of $\boldsymbol\chi$ and $\textbf{v}$, we get an equivalent formula for the effective diffusivity as follows
\begin{align}
D^{E} = D_0I + \big\langle \nabla \boldsymbol\chi(t,\textbf{x})\otimes \nabla\boldsymbol\chi(t,\textbf{x}) \big\rangle_{p}.
\label{CellProblem_EffectiveDiffusivity2}
\end{align}
The correction to $D_0I$ in Eq.\eqref{CellProblem_EffectiveDiffusivity2} is nonnegative definite. We can see that $\textbf{e}^{T}D^E\textbf{e}\geq D_0 $ for all unit column vectors $\textbf{e}\in \mathbb{R}^{d}$, which is called  convection-enhanced diffusion. By using a variational principle for time-periodic velocity flows, one can find a upper bound for the effective diffusivity, i.e., 
there exists a nonzero unit column vector $\textbf{e}\in \mathbb{R}^{d}$, such that  
\begin{equation}\label{eqn:maximaldiffusion}
\textbf{e}^{T}D^E\textbf{e} \sim \frac{1}{D_0}, \quad \text{as } D_0\to 0,
\end{equation} 
which is known as the maximal enhancement. More details of the derivation can be found in \cite{Biferale:95,mezic1996maximal,Fannjiang:94}. 
We point out that many theoretical results were built upon the passive tracer models with time-independent 
flows. We are interested in studying the convection-enhanced diffusion phenomenon for time-dependent chaotic flows in this paper. Especially, whether the time-dependent chaotic flows still have the maximal enhancement.


In practice, the cell problem \eqref{CellProblem_EffectiveDiffusivity} can be solved using numerical methods,
such as finite element methods and spectral methods. 
However, when $D_0$ becomes extremely small, the solutions of the cell problem \eqref{CellProblem_EffectiveDiffusivity} develop sharp gradients and demand a large number of finite element basis or Fourier basis to resolve, which makes the Eulerian framework expensive. In addition, when the dimension of spatial variables is big, say $d=3$, the Eulerian framework becomes expensive too. 

Alternatively, one can use the Lagrangian framework to compute the effective diffusivity matrix, which is defined as follows, 
\begin{align}
D_{ij}^{E}=\lim_{t\rightarrow\infty}\frac{\Big\langle\big(x_i(t)-x_i(0))(x_j(t)-x_j(0)\big)\Big\rangle}{2t},
\quad 1\leq i,j \leq d,
\label{Def_EffectiveDiffusivity_Lagrangian}
\end{align}
where $\textbf{X}(t)=(x_1(t),...,x_d(t))^{T}$ is the position of a particle tracer at time $t$ and the average $\langle\cdot\rangle $ is taken over an ensemble of particles.
If the above limit exists, that means the transport of the particle is a standard diffusion process, at least on a long-time scale. 
For example, when the velocity field is the Taylor-Green velocity field \cite{Fannjiang:94,StuartZygalakis:09}, the long-time and large-scale behavior of the passive tracer model is a diffusion process. However, there are cases showing that the spreading of particles does not grow linearly with time but has a power-law $t^{\gamma}$, where $\gamma>1$ and $\gamma<1$ correspond to super-diffusive and sub-diffusive behaviors, respectively; see e.g. \cite{Biferale:95,Majda:99,BenOwhadi2003}.

We shall use the Lagrangian framework in this paper. The Lagrangian framework has the advantages that: (1) it is easy to implement; (2) it does not directly suffer from a small molecular diffusion coefficient $\sigma$ during the computation; and (3) its computational cost only linearly depends on the dimension of spatial variables in the passive tracer models. However, the major difficulty in solving Eq.\eqref{eqn:generalSDED-TDFlow} 
is that the computational time should be long enough to approach the diffusion time scale. To address this challenge, we shall develop robust numerical schemes, which are structure-preserving and accurate for long-time integration. Moreover, we aim to develop the convergence analysis of the proposed numerical schemes. Finally, we shall investigate the relationship between parameters of the time-dependent chaotic  flows and the corresponding effective diffusivity.  

\section{Stochastic structure-preserving schemes}\label{sec:NewStochasticIntegrators}
\subsection{Derivation of numerical schemes}\label{sec:DerivationSchemes}
\noindent
To demonstrate the main idea, we first construct stochastic structure-preserving schemes for a two-dimensional passive tracer model. The derivation of the numerical schemes for high-dimensional passive tracer models will be discussed in Section \ref{sec:HighDimensionalCases}. Specifically, let $\textbf{X}=(x_1,x_2)^{T}$ denote the position of the particle, then the model can be written as   
\begin{equation}\label{eqn:particleSDE}
\begin{cases}
\mathrm{d}x_1=v_1\mathrm{d}t+\sigma \mathrm{d}w_{1,t}, \quad  x_1(0)=x_{1,0}, \\
\mathrm{d}x_2=v_2\mathrm{d}t+\sigma \mathrm{d}w_{2,t},  \quad    x_2(0)=x_{2,0}, 
\end{cases}
\end{equation}
where $w_{i,t}$, $i=1,2$ are independent Brownian motions.  We assume that $\textbf{v}=(v_1,v_2)^{T}$ is divergence-free and mean-zero at any time $t$, i.e.,
\begin{equation}
\label{eqn:divfree}
\nabla\cdot \textbf{v}:=\partial_{x_1}v_1+\partial_{x_2}v_2=0\quad \forall t,
\end{equation}
and
\begin{equation}
\label{eqn:meanzero}
\begin{cases}
\int_{\mathbb{T}}v_1(t,x_1,x_2)\mathrm{d}x_2=0\quad \forall x_1,~t,\\
\int_{\mathbb{T}}v_2(t,x_1,x_2)\mathrm{d}x_1=0\quad \forall x_2,~t,
\end{cases} 
\end{equation}
where $\mathbb{T}=[0,1]$. We also assume that $\textbf{v}$ is smooth and its first-order
derivatives $v_i(t,x_1,x_2)$, $i=1,2$ are bounded. These conditions are necessary to guarantee the
existence and uniqueness of solutions of the SDE \eqref{eqn:particleSDE}; see \cite{Oksendal:13}. Moreover, we assume that the diagonal of the Jacobian of the velocity field $\textbf{v}=(v_1,v_2)^{T}$ are all zeros. A typical example is a Hamiltonian system with a separable Hamiltonian, i.e., there exists $H(t,x_1,x_2)=H_1(t,x_1)+H_2(t,x_2)$ such that, 
\begin{equation}
\label{eqn:Hamiltonian_system}
v_1=-\partial_{x_2}H,\quad v_2=\partial_{x_1}H.
\end{equation}  

\textcolor{black}{
In this paper, we denote with slightly abuse of notation that $v_1(t,x_2)=v_1(t,x_1,x_2)$ and $v_2(t,x_1)=v_2(t,x_1,x_2)$. These notations simplify our derivation. Whenever a statement corresponds to $v_1(t,x_2)$ (or $v_2(t,x_1)$) is made, it is equivalent to that for $v_1(t,x_1,x_2)$ (or $v_2(t,x_1,x_2)$)}. 

In \cite{WangXinZhang:18}, we proposed a stochastic structure-preserving scheme based on a Lie-Trotter splitting scheme to solve the SDE \eqref{eqn:particleSDE}. Specifically, we split the problem \eqref{eqn:particleSDE} into a deterministic subproblem, 
\begin{equation}\label{eqn:particleSDE_1}
\begin{cases}
\mathrm{d}x_1=v_1(t,x_2)\mathrm{d}t, \\
\mathrm{d}x_2=v_2(t,x_1)\mathrm{d}t, 
\end{cases}
\end{equation} 
which is solved by using a symplectic-preserving scheme (e.g., the symplectic Euler scheme for deterministic equations with frozen time), and a stochastic subproblem,  
\begin{equation}\label{eqn:particleSDE_2}
\begin{cases}
\mathrm{d}x_1=\sigma \mathrm{d}w_{1,t},  \\
\mathrm{d}x_2=\sigma \mathrm{d}w_{2,t}, 
\end{cases}
\end{equation} 
which is solved by using the Euler-Maruyama scheme \cite{Oksendal:13}. Notice that when $\sigma$ is
a constant in \eqref{eqn:particleSDE_2}, {\color{black}the Euler-Maruyama scheme exactly solves Eq.\eqref{eqn:particleSDE_2}}

Now we discuss how to discretize Eq.\eqref{eqn:particleSDE}. From time $t=t_n$ to time $t=t_{n+1}$, where $t_{n+1}=t_{n}+\Delta t$, $t_0=0$, and $\Delta t$ is the time step, we assume the numerical solution $\textbf{X}^n=(x_1^n,x_2^n)^{T}$ is given, which approximates the exact solution $\textbf{X}(t_n)$ to the SDE \eqref{eqn:particleSDE} at time $t_n=n\Delta t$. Then, we apply the Lie-Trotter splitting method to solve the SDE \eqref{eqn:particleSDE} and obtain,
\begin{equation}\label{scheme}
\begin{cases}
x_1^{n+1}=x_1^{n}+v_1(t_{n+\frac{1}{2}},x_2^{n})\Delta t+\sigma N^{n}_1,\\
x_2^{n+1}=x_2^{n}+v_2\big(t_{n+\frac{1}{2}},x_1^{n}+v_1(t_{n+\frac{1}{2}},x_2^{n})\Delta t\big)\Delta t+\sigma N^{n}_2,
\end{cases}
\end{equation}
where $t_{n+\frac{1}{2}}=t_n+\frac{\Delta t}{2}$,  $N^{n}_1=\sqrt{\Delta t}\xi_1$, $N^{n}_2=\sqrt{\Delta t}\xi_2$,  and $\xi_1$, $\xi_2\sim\mathcal{N}(0,1)$ are i.i.d. normal random variables. In this paper, we view the solution sequence $\textbf{X}^n=(x_1^n,x_2^n)^{T}$, $n=1,2,3,...$, generated by the scheme \eqref{scheme} as a discrete Markov stochastic process, which enables us to use techniques from stochastic process to obtain a sharp convergence analysis for the numerical solutions; see Section \ref{sec:ConvergenceAnalysis}. 

In a 2D Hamiltonian system, when the system contains an additive temporal noise, the noise itself is considered to be symplectic pathwise \cite{Milstein:02}. Therefore, we state that the scheme \eqref{scheme} is stochastic symplectic-preserving since it preserves symplecticity. Specifically, the scheme \eqref{scheme} can be viewed as a composition of two symplectic transforms. In addition, we know that the numerical solution converges to the exact one as the time step $\Delta t$ approaches zero. In high-dimensional systems, a structure-preserving scheme refers to a volume-preserving scheme; see Section \ref{sec:HighDimensionalCases}.

\subsection{The backward Kolmogorov equation and related results}\label{sec:KnownFacts}
\noindent
We first define the backward Kolmogorov equation associated with the Eq.\eqref{eqn:particleSDE} as 
\begin{align}\label{BackwardKolmolgorovEquation}
u_{t}=\mathcal{L}u, \quad u(0,\tau,\textbf{x})=\phi(\tau,\textbf{x}),
\end{align}
where the generator $\mathcal{L}$ associated with the Markov process in Eq.\eqref{eqn:particleSDE} 
is given by
\begin{align}
\mathcal{L}=\partial_\tau+v_1(\tau,x_2)\partial_{x_1}+v_2(\tau,x_1)\partial_{x_2}+
\frac{\sigma^2}{2}(\partial_{x_1x_1}+\partial_{x_2x_2}).
\label{HamiltonianFlowOperator}
\end{align}
Recall that the solution $u(t,\tau,\textbf{x})$ to the Eq.\eqref{BackwardKolmolgorovEquation} satisfies,
$u(t,\tau,\textbf{x})=\mathbb{E}\big[\phi(t+\tau,\textbf{X}_{t+\tau})|\textbf{X}_\tau=\textbf{x}\big]$ where $\textbf{X}_t$ is the solution to Eq.\eqref{eqn:particleSDE} and $\phi$ is a smooth function in $\mathbb{R}^{1}\times \mathbb{R}^{2}$. 
In other words, $u(t,\tau,\textbf{x})$ is the flow generated by the original SDE \eqref{eqn:particleSDE}.

Similarly, 
 we can study the flow generated by the stochastic structure-preserving scheme \eqref{scheme}.
According to the splitting method used in the derivation of the scheme in Section \ref{sec:DerivationSchemes}, we respectively define $\mathcal{L}_1=\partial_\tau$, $\mathcal{L}_2=v_1\partial_{x_1}$, $\mathcal{L}_3=v_2\partial_{x_2}$, and  $\mathcal{L}_4=\frac{\sigma^2}{2}(\partial_{x_1x_1}+\partial_{x_2x_2})$. Starting from $u(0,\cdot,\cdot)$, during one time step $\Delta t$, we compute 
\begin{equation}\label{eqn:flowpde}
\begin{cases}
\partial_tu^1&=\mathcal{L}_1 u^1,\quad u^1(0,\cdot,\cdot)=u(0,\cdot,\cdot),\\
\partial_t u^2&=\mathcal{L}_2 u^2,\quad u^2(0,\cdot,\cdot)=u^1(\frac{\Delta t}{2},\cdot,\cdot), \\
\partial_t u^3&=\mathcal{L}_3u^3,\quad u^3(0,\cdot,\cdot)=u^2(\Delta t,\cdot,\cdot),\\
\partial_t u^4&=\mathcal{L}_1u^4,\quad u^4(0,\cdot,\cdot)=u^3(\Delta t,\cdot,\cdot),\\
\partial_t u^5&=\mathcal{L}_4u^5,\quad u^5(0,\cdot,\cdot)=u^4(\frac{\Delta t}{2},\cdot,\cdot).\\
\end{cases}		
\end{equation}
Then, $u^5(\Delta t,\cdot,\cdot)$ will be the flow at time $t=\Delta t$ generated by our stochastic structure-preserving scheme \eqref{scheme} and it approximates the solution $u(\Delta t,\cdot,\cdot)$ to the Eq.\eqref{BackwardKolmolgorovEquation} well when $\Delta t$ is small. It is also worth mentioning that, $u^3(\Delta t,\cdot,\cdot)$ is the exact flow generated by the deterministic symplectic Euler scheme in solving Eq.\eqref{eqn:particleSDE_1}. 
We repeat this process to compute the flow equations of our scheme at other time steps, which approximate the solution $u(n\Delta t,\cdot,\cdot), n=2,3,...$ to the Eq.\eqref{BackwardKolmolgorovEquation} at different time steps. 
\begin{remark} 
Given the operators $\mathcal{L}_i$, $i=1,2,3,4$, there are many possible choices in setting the coefficients for each operator $\mathcal{L}_i$ and designing the splitting method; see Section 2.5 of \cite{ErnstLubich:06}. Eq.\eqref{eqn:flowpde} is a simple choice that was used in this paper.	
\end{remark}

To analyze the error between the flow operator in Eq.\eqref{BackwardKolmolgorovEquation} and the composition of  operators in Eq.\eqref{eqn:flowpde}, we shall resort to the Baker-Campbell-Hausdorff (BCH) formula, which is widely used in {\color{black}non-commutative} algebra \cite{BCHformula1974baker}. For example, in the matrix theory,
\begin{equation}\label{eqn:BCHformula}
\exp(tA)\exp(tB)=\exp\bigg(t(A+B)+t^2\frac{[A,B]}{2}+\frac{t^3}{12}\Big(\big[A,[A,B]\big]+\big[B,[B,A]\big]\Big)+\cdots\bigg),
\end{equation}
where $t$ is a scalar, $A$ and $B$ are two square matrices of the same size, $[,]$ is the Lie-Bracket,
and the remaining terms on the right hand side are all nested Lie-brackets. 

In our analysis, we replace the matrices in Eq.\eqref{eqn:BCHformula} by differential operators and the BCH formula yields critical insights into the particular structure of the splitting error. Let $I_{\Delta t}$ denote the composite flow operator associated with Eq.\eqref{eqn:flowpde}, i.e., 
\begin{equation}\label{eqn:operatorflow}
I_{\Delta t} u(0,\cdot,\cdot):=\exp(\Delta t\mathcal{L}_4)\exp(\frac{\Delta t}{2}\mathcal{L}_1)\exp(\Delta t\mathcal{L}_3)\exp(\Delta t\mathcal{L}_2)\exp(\frac{\Delta t}{2}\mathcal{L}_1)u(0 ,\cdot,\cdot).
\end{equation}
Notice that after propagating time $t=\Delta t$, the exact solution to the Eq.\eqref{BackwardKolmolgorovEquation} started at any $\tau$ can be represented as 
\begin{equation}\label{eqn:TrueFlowOperator}
u(\Delta t,\cdot,\cdot)=\exp(\Delta t\mathcal{L})u(0,\cdot,\cdot)=
\exp\big(\Delta t(\mathcal{L}_1+\mathcal{L}_2+\mathcal{L}_3+\mathcal{L}_4)\big)u(0,\cdot,\cdot).
\end{equation}
Therefore, we can apply the BCH formula to analyze the error between the original flow and 
the approximated flow. Moreover, we find that computing the $k$-th order modified equation associated with Eq.\eqref{eqn:particleSDE} in the backward error analysis (BEA) \cite{Reich:99,debussche2012weak} is equivalent to computing the terms of BCH formula up to order $(\Delta t)^k$ in the Eq.\eqref{eqn:operatorflow}. 
To show that the solution generated by Eq.\eqref{scheme} follows a perturbed Hamiltonian system (with divergence-free velocity and additive noise) at any order $k$, we only need to consider the $(k+1)$-nested Lie bracket consisting of $\big\{\partial_\tau,v_1\partial_{x_1},\ v_2\partial_{x_2},\ \frac{\sigma^2}{2}(\partial_{x_1x_1}+\partial_{x_2x_2})\big\}$ and we can easily see that they generate divergence-free fields.

We remark that given any explicit splitting scheme for deterministic systems, by adding additive noise we shall obtain a similar form of flow propagation. And we shall see in later proof that, such operator formulation is very effective in analyzing the order of convergence and volume-preserving property.


\section{Convergence analysis}\label{sec:ConvergenceAnalysis}
\noindent 
In this section, we prove the convergence rate of our stochastic structure-preserving schemes in computing effective diffusivity based on a probabilistic approach, which allows us to get rid of the exponential growth factor in the error estimate. {\color{black}We first limit our analysis to 2D separable Hamiltonian velocity fields. Then, in Section \ref{sec:HighDimensionalCases} we will show that all the derivations can be generalized to high-dimensional cases.}
\subsection{Convergence to an invariant measure}\label{sec:InvariantMeasure}
\noindent 
To compute the effective diffusivity of a passive tracer model using a Lagrangian numerical scheme is closely related to study the limit of a solution sequence (a stochastic process) generated by the numerical scheme. Therefore, we can apply the results from ergodic theory to study the convergence behaviors of the solution. 

Let $(S,\Sigma)$ be a probability space, on which a family $P(\textbf{x},E)$, $\textbf{x}\in S$, $E\in \Sigma$, of probability measure is defined. We assume $\textbf{x}\to P(\textbf{x},E)$ is measurable, $\forall E\in \Sigma$. This corresponds to a linear bounded operator on $\mathcal{B}(S)$, \textcolor{black}{which is  the space of bounded measurable functions on $S$}. This operator, denoted by $P$, is defined by,
\begin{equation}
P\phi(\textbf{x})=\int_{S}P(\textbf{x},\mathrm{d}\textbf{z})\phi(\textbf{z}), \quad \forall \phi\in \mathcal{B}(S).   
\end{equation}
Clearly $||P||\leq 1$. One of the main objectives of ergodic theory is to study the limit of the operator sequence $P^n$ as $n\rightarrow +\infty$. The result can be summarized into the following proposition, 
which plays a fundamental role in our convergence analysis. 
\begin{proposition}[Theorem 3.3.1 of \cite{BensoussanLionsPapa:2011}]\label{pro:doobs}
We assume that,
	\begin{enumerate}
		\item $S$ is a compact metric space and $\Sigma$ is the Borel $\sigma$-algebra;
		\item there exists a probability measure $\mu$ on $(S,\Sigma)$ such that $P(\textbf{x},E)=\int_E p(\textbf{x},\textbf{y})\mu(\mathrm{d}\textbf{y})$;
		\item $p(\textbf{x},\textbf{y}):S\times S\to \mathbb{R}^+$ is continuous;
		\item there exists a ball $U_0$ such that $\mu(U_0)>0$ and a positive number $\delta>0$ (depending on $U_0$) such that $p(\textbf{x},\textbf{y})\geq \delta$, $\textbf{x}\in S$, $\forall \textbf{y}\in U_0$.
	\end{enumerate}
Then, there exists one and only one invariant probability measure $\pi$ on $(S,\Sigma)$ and one has,
\begin{equation}
\sup_{\textbf{x}\in S}\Big|P^n\phi(\textbf{x})-\int\phi\pi(\mathrm{d}\textbf{x})\Big|\leq C||\phi||e^{-\rho n},\ \forall \phi\in \mathcal{B}(S),
\end{equation}
where $\rho=\log\frac{1}{1-\delta \mu(U_0)}>0$ and $C=\frac{2}{1-\delta \mu(U_0)}>0$ are independent of $\phi$.
\end{proposition}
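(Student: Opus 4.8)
The plan is to recognize Proposition~\ref{pro:doobs} as a version of Doob's ergodic theorem and to prove it by the classical minorization-plus-coupling argument (equivalently, via the Dobrushin contraction coefficient). The only structural hypothesis I would really exploit is assumption~(4). Setting $\alpha:=\delta\,\mu(U_0)\in(0,1)$ and defining the probability measure $\nu(E):=\mu(E\cap U_0)/\mu(U_0)$, assumptions~(2) and~(4) yield the uniform minorization
\[
P(\textbf{x},E)=\int_E p(\textbf{x},\textbf{y})\,\mu(\mathrm{d}\textbf{y})\ \ge\ \int_{E\cap U_0}\!\delta\,\mu(\mathrm{d}\textbf{y})\ =\ \alpha\,\nu(E),\qquad \forall\,\textbf{x}\in S,\ E\in\Sigma .
\]
(Continuity of $p$ and compactness of $S$ are not needed for this step; they serve only to place the statement in the functional-analytic setting of \cite{BensoussanLionsPapa:2011} and to ensure $P$ is Feller.)

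Next I would upgrade the minorization to a contraction in total variation. For probability measures $\lambda_1,\lambda_2$ on $(S,\Sigma)$, write the Hahn--Jordan decomposition $\lambda_1-\lambda_2=\beta(\eta^+-\eta^-)$ with $\beta=\frac{1}{2}\|\lambda_1-\lambda_2\|_{TV}$ and $\eta^\pm$ probability measures (the case $\beta=0$ being trivial). Integrating the minorization against $\eta^\pm$ gives $\eta^\pm P\ge\alpha\nu$, so $\eta^\pm P-\alpha\nu$ are nonnegative measures, each of total mass $1-\alpha$, whence
\[
\|\lambda_1P-\lambda_2P\|_{TV}=\beta\,\|\eta^+P-\eta^-P\|_{TV}\le\beta\big(\|\eta^+P-\alpha\nu\|_{TV}+\|\eta^-P-\alpha\nu\|_{TV}\big)=2\beta(1-\alpha)=(1-\alpha)\|\lambda_1-\lambda_2\|_{TV}.
\]
Thus $\lambda\mapsto\lambda P$ is a $(1-\alpha)$-contraction on the complete metric space of probability measures on $(S,\Sigma)$ under the total-variation distance.

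With the contraction in hand the remainder is bookkeeping. The Banach fixed-point theorem gives a unique fixed point $\pi$, i.e.\ a unique invariant probability measure, and iterating the contraction yields $\|\delta_{\textbf{x}}P^n-\pi\|_{TV}=\|\delta_{\textbf{x}}P^n-\pi P^n\|_{TV}\le2(1-\alpha)^{n-1}$, starting from the crude bound $\|\delta_{\textbf{x}}P-\pi P\|_{TV}\le2$. Since $P^n\phi(\textbf{x})=\int_S\phi\,\mathrm{d}(\delta_{\textbf{x}}P^n)$, we conclude
\[
\Big|P^n\phi(\textbf{x})-\int\phi\,\pi(\mathrm{d}\textbf{x})\Big|\le\|\phi\|\,\|\delta_{\textbf{x}}P^n-\pi\|_{TV}\le\frac{2}{1-\alpha}\,\|\phi\|\,(1-\alpha)^n=C\,\|\phi\|\,e^{-\rho n},
\]
uniformly in $\textbf{x}\in S$, with $\rho=\log\frac{1}{1-\alpha}=\log\frac{1}{1-\delta\mu(U_0)}$ and $C=\frac{2}{1-\delta\mu(U_0)}$, as stated.

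The genuinely delicate point --- the ``hard part'' --- is the total-variation contraction in the second paragraph: one must treat the Hahn--Jordan decomposition carefully and, in particular, justify that the two positive parts $\eta^\pm P$ may be simultaneously thinned by one and the same measure $\alpha\nu$, which is precisely where the uniformity over all $\textbf{x}\in S$ in assumption~(4) is indispensable. Everything else (deriving the minorization, the fixed-point step, converting the measure estimate into the operator estimate, and matching the constants) is routine. I would also note that the same bound can be phrased through Dobrushin's ergodicity coefficient $\tau(P)=\sup_{\textbf{x},\textbf{y}}\frac{1}{2}\|P(\textbf{x},\cdot)-P(\textbf{y},\cdot)\|_{TV}$, which assumption~(4) bounds by $1-\alpha$, giving an alternative but equivalent route.
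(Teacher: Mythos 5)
Your argument is correct. Note first that the paper itself offers no proof of Proposition~\ref{pro:doobs}: it is imported verbatim as Theorem~3.3.1 of \cite{BensoussanLionsPapa:2011}, so there is nothing internal to compare against. Your Doeblin minorization $P(\textbf{x},\cdot)\ge\alpha\nu$ with $\alpha=\delta\mu(U_0)$, upgraded to the total-variation contraction $\|\lambda_1P-\lambda_2P\|_{TV}\le(1-\alpha)\|\lambda_1-\lambda_2\|_{TV}$ via the Hahn--Jordan decomposition, followed by the Banach fixed-point step, is precisely the classical route behind the cited theorem (the reference runs the dual version, contracting the oscillation $\sup P\phi-\inf P\phi$ rather than the total variation of the laws, but the two are the same estimate), and your constants $C=\frac{2}{1-\delta\mu(U_0)}$ and $\rho=\log\frac{1}{1-\delta\mu(U_0)}$ match the statement exactly. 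The only point worth making explicit is that normalization of $P(\textbf{x},\cdot)$ forces $\alpha\le1$, and the stated constants presuppose $\alpha<1$; the case $\alpha=1$ is degenerate (one-step convergence to $\nu$) and should be dismissed in a sentence. You are also right that compactness of $S$ and continuity of $p$ play no role in this argument beyond guaranteeing the measurability needed for $P$ to act on $\mathcal{B}(S)$; they matter elsewhere in the paper (e.g.\ in Lemma~\ref{lem:kernel}, where continuity plus compactness is what produces the uniform lower bound $\delta$ in the first place).
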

Now we study the convergence behaviors of the solution generated by our stochastic structure-preserving scheme \eqref{scheme}. We first prove a lemma as follows. 
\begin{lemma}\label{lem:kernel}
	Let $\tilde{Y}=\mathbb{R}^2/\mathbb{Z}^2$ denote the physical torus space and $\mathbb{T}$ be the time periodic space. Let $I_{\tau,1+\tau}^*$ denote the  transform of the density  on $\tilde{Y}$ during $[\tau,1+\tau]$ (time period is $1$) using the numerical scheme \eqref{scheme}.	In addition, let $I_{\tau,1+\tau}$ denote the adjoint operator (i.e., the flow operator) of $I_{\tau,1+\tau}^*$ in the space of $\mathcal{B}(\tilde{Y})$, which is the set of bounded measurable functions on $\tilde{Y}$.  Then, there exists one and only one invariant probability measure on $(\tilde{Y},\Sigma)$, denoted by $\pi_\tau$, satisfying,
	\begin{equation}
	\sup_{\textbf{x}\in\tilde{Y}}\Big|\big((I_{\tau,1+\tau})^n\phi\big) (\textbf{x})-\int \phi(\textbf{x}')\pi_\tau(\mathrm{d}\textbf{x}')\Big|\leq C ||\phi||_{L_\infty}e^{-\rho n},\quad \forall \phi\in \mathcal{B}(\tilde{Y}),
	\label{kernel-estimate1}
	\end{equation}
	where $\rho>0$, $C>0$ are independent of $\phi(\cdot)$. Moreover, the kernel space of $(I_d-I_{\tau,1+\tau})$ is the constant functions in $\tilde{Y}$, where $I_d$ is the identity operator.

	\begin{proof}
	
	We shall verify that the transition kernel associated with the numerical scheme \eqref{scheme} 
	satisfies the assumptions required by Prop. \ref{pro:doobs}.
	First notice that in the space $\mathbb{R}^2$, the integration process associated with the numerical scheme \eqref{scheme} 	can be expressed as a Markov process with the transition kernel,
	\begin{align}\label{eqn:rnkernel}
	&K_{t}\big(\textbf{X}^n,\textbf{X}^{n+1}\big)=\frac{1}{2\pi \sigma^2\Delta t}\cdot\nonumber\\&\resizebox{.9\hsize}{!}{$\exp\Bigg(
		-\frac{\Big(x_1^{n+1}-x_1^{n}-v_1(t+\frac{\Delta t}{2},x_2^{n})\Delta t\Big)^2+\Big(x_2^{n+1}-x_2^{n}-v_2\big(t+\frac{\Delta t}{2},x^{n+1}_1-x^{n}_1-v_1(t+\frac{\Delta t}{2},x^{n}_2)\Delta t\big)\Delta t\Big)^2}{2\sigma^2\Delta t}\Bigg)$},
	\end{align}	
	where $\textbf{X}^n=(x^{n}_1,x^{n}_2)^{T}$ and $\textbf{X}^{n+1}=(x^{n+1}_1,x^{n+1}_2)^{T}$ are the numerical solutions at time $t=t_n$ and $t=t_{n+1}$, respectively. 
	
	Then, using the periodicity of $\textbf{v}$, we directly extend Eq.\eqref{eqn:rnkernel} to the torus space $\tilde{Y}$ as 
	\begin{align}\label{eqn:tnkernel}
	&\tilde{K}_{\tau}\big(\textbf{X}^n,\textbf{X}^{n+1}\big)=\sum_{i,j\in\mathbb{Z}}\frac{1}{2\pi \sigma^2\Delta t}\cdot\nonumber\\&\resizebox{.9\hsize}{!}{$\exp\Bigg(
		-\frac{\Big(x^{n+1}_1+i-x^{n}_1-v_1(\tau+\frac{\Delta t}{2},x^{n}_2)\Delta t\Big)^2+\Big(x^{n+1}_2+j-x^{n}_2-v_2\big(\tau+\frac{\Delta t}{2},x^{n+1}_1-x^{n}_1-v_1(\tau+\frac{\Delta t}{2},x^{n}_2)\Delta t\big)\Delta t\Big)^2}{2\sigma^2\Delta t}\Bigg)$}.
	\end{align}
	Let $\tilde{\textbf{K}}_{\tau,\tau+k\Delta t}$ denote the kernel from $\tau$ to $\tau+k\Delta t$, 
	which is the density of the transition kernel associated with applying our scheme starting from time $\tau$ for $k$ steps. Then, we have
	\begin{equation}
	\tilde{\textbf{K}}_{\tau,\tau+k\Delta t}(\textbf{X}^0,\textbf{X}^k)=\int_{(\tilde{Y})^{k-1}}\prod_{m=0}^{k-1}\tilde{K}_{\tau+m\Delta t}(\textbf{X}^m,\textbf{X}^{m+1})\mathrm{d}\textbf{X}^1\mathrm{d}\textbf{X}^2\cdots \mathrm{d}\textbf{X}^{k-1}.
	\label{eqn:defineKtauktau}
	\end{equation} 
    We choose $k=\frac{1}{\Delta t}$ and obtain $\tilde{\textbf{K}}_{\tau,\tau+1}$. 
	One can see that the kernel $\tilde{\textbf{K}}_{\tau,\tau+1}$ is essentially bounded above zero since $\tilde{K}_{\tau+m\Delta t}$ in \eqref{eqn:defineKtauktau} are all positive.  Moreover, if $0 < \Delta t\ll1$, $\tilde{\textbf{K}}_{\tau,\tau+1}$ is a continuous function on the domain $\tilde{Y}\times\tilde{Y}$.  Then by noticing that the domain $\tilde{Y}\times\tilde{Y}$ is compact, the kernel $\tilde{\textbf{K}}_{\tau,\tau+1}$ is strictly positive.  Namely, there exists $\delta_{\tau}>0$ such that $\tilde{\textbf{K}}_{\tau,\tau+1}(\textbf{X}^0,\textbf{X}^k)>\delta_{\tau},\ \forall (\textbf{X}^0,\textbf{X}^{k})\in \tilde{Y}\times\tilde{Y}$.  If we apply Prop.\ref{pro:doobs} to $I_{\tau,1+\tau}$ (whose kernel is $\tilde{\textbf{K}}_{\tau,\tau+1}$), we prove the  statement in \eqref{kernel-estimate1}. 
	
	Finally, we know that the operator $I_{\tau,1+\tau}$ is compact since it is an integral operator with a continuous kernel. By using the Fredholm alternative, we know that   $\dim\ker(I_d-I_{\tau,1+\tau})=\dim\ker(I_d-I_{\tau,1+\tau}^*)=1$. Therefore, it is easy to verify that the constant functions are in the kernel of $(I_d-I_{\tau,1+\tau})$. 
\end{proof}
\end{lemma}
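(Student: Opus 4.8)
The plan is to cast the one-period evolution under scheme \eqref{scheme} as a Markov transition operator on the compact torus $\tilde Y$ and verify that it meets the four hypotheses of Proposition \ref{pro:doobs}, from which \eqref{kernel-estimate1} follows at once; the description of $\ker(I_d-I_{\tau,1+\tau})$ is then a short consequence of compactness and the Fredholm alternative.

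First I would write down the one-step transition kernel on $\mathbb{R}^2$. The deterministic symplectic-Euler substep in \eqref{scheme} is a diffeomorphism of $\mathbb{R}^2$, and the stochastic substep adds an independent Gaussian $\sigma\sqrt{\Delta t}\,(\xi_1,\xi_2)$; hence, conditionally on $\textbf{X}^n$, the point $\textbf{X}^{n+1}$ is Gaussian with nondegenerate covariance $\sigma^2\Delta t\, I$ centered at the image of $\textbf{X}^n$ under the deterministic map, which is exactly the density \eqref{eqn:rnkernel}. Since $\textbf{v}$ is $1$-periodic in each spatial variable, this density descends to the torus by summing over the integer lattice $\mathbb{Z}^2$, giving \eqref{eqn:tnkernel}; the Gaussian tails make this sum, and all its derivatives, converge locally uniformly, so the torus kernel $\tilde K_\tau$ is smooth and everywhere strictly positive. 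Choosing $\Delta t$ so that $k:=1/\Delta t$ is an integer and composing $k$ such one-step kernels via Chapman--Kolmogorov produces the one-period kernel $\tilde{\textbf{K}}_{\tau,\tau+1}$ of \eqref{eqn:defineKtauktau}.

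Next I would check the hypotheses of Proposition \ref{pro:doobs} with $S=\tilde Y$, reference measure $\mu$ equal to Lebesgue (Haar) measure on $\tilde Y$ (already a probability measure), and transition density $p=\tilde{\textbf{K}}_{\tau,\tau+1}$: (i) $\tilde Y$ is a compact metric space with its Borel $\sigma$-algebra; (ii) the transition kernel has density $p$ with respect to $\mu$; (iii) $p$ is continuous on $\tilde Y\times\tilde Y$, being a finite-range integral of products of the continuous factors $\tilde K_{\tau+m\Delta t}$ over the compact set $(\tilde Y)^{k-1}$; (iv) since each factor is strictly positive and $\tilde Y\times\tilde Y$ is compact, $p$ attains a strictly positive minimum $\delta_\tau>0$, so taking $U_0=\tilde Y$ gives $\mu(U_0)=1>0$ and $p\ge\delta_\tau$ there. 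Proposition \ref{pro:doobs} then yields a unique invariant probability measure $\pi_\tau$ and the geometric estimate \eqref{kernel-estimate1} with explicit $\rho=\log\frac{1}{1-\delta_\tau}>0$ and $C=\frac{2}{1-\delta_\tau}$.

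Finally, continuity of $\tilde{\textbf{K}}_{\tau,\tau+1}$ on the compact torus makes $I_{\tau,1+\tau}$ a compact operator, so by the Fredholm alternative $\dim\ker(I_d-I_{\tau,1+\tau})=\dim\ker(I_d-I_{\tau,1+\tau}^*)$. On one hand, \eqref{kernel-estimate1} shows that any fixed point $\phi$ of $I_{\tau,1+\tau}$ must equal $\int\phi\,d\pi_\tau$, hence is constant, so the common dimension is $\le 1$; on the other hand, $\tilde{\textbf{K}}_{\tau,\tau+1}(\textbf{x},\cdot)$ is a probability density for each $\textbf{x}$, so $I_{\tau,1+\tau}\mathbf{1}=\mathbf{1}$ and the constants do lie in the kernel. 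Hence $\ker(I_d-I_{\tau,1+\tau})$ is precisely the space of constant functions on $\tilde Y$. The one place demanding genuine care is steps (iii)--(iv): one must control the lattice sums uniformly enough that $\tilde K_\tau$, and therefore its $k$-fold composition, is truly jointly continuous on the compact product space, since only after this can compactness be used to upgrade pointwise positivity to the uniform lower bound $\delta_\tau>0$ that drives the whole argument.
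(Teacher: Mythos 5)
Your proposal is correct and follows essentially the same route as the paper: construct the one-step Gaussian kernel, periodize it by lattice summation, compose over one time period, verify the hypotheses of Prop.~\ref{pro:doobs} via strict positivity and continuity on the compact torus, and finish the kernel-space claim with compactness and the Fredholm alternative. Your added details (taking $U_0=\tilde Y$ explicitly, noting $I_{\tau,1+\tau}\mathbf{1}=\mathbf{1}$, and deducing constancy of fixed points directly from \eqref{kernel-estimate1}) are consistent with, and slightly more explicit than, the paper's argument.
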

Equipped with the Lemma \ref{lem:kernel}, we study the convergence rate  of the space-time transition kernel associated with our numerical scheme \eqref{scheme}. 
\begin{theorem}	\label{thm:main-per}
	Let $\Delta t=\frac{1}{N}$, $N$ is a positive integer.
	We have the following properties hold:
	\begin{itemize}
		\item[(a)] Given $\Delta t$, there exists $C>0$ and $\rho>0$, such that,
		\begin{equation}
		\sup_{\tau,\textbf{x}}\Big|\big(I_{\Delta t}^N\big)^n\phi (\tau,\textbf{x})-\int \phi(\tau,\textbf{x}')\pi_\tau(\mathrm{d}\textbf{x}')\Big|\leq C ||\phi||_{L_\infty}e^{-\rho n},\quad \forall \phi\in \mathcal{B}(\mathbb{T}\times\tilde{Y}),
		\end{equation}
		where $C$ and $\rho$ do not depend on $\phi $ and $\tau$. 
		\item[(b)] If $\int_{\tilde{Y}} \phi\pi_\tau=0$, then we get
		\begin{equation}
		\lim_{n\to\infty}\sum_{i=1}^n\mathbb{E}\phi(\tau,\textbf{X}^{N\tau+i})<\infty, \quad \forall \tau\in\mathbb{T}.
		\end{equation}
		\item[(c)] The kernel space of $(I_d-I_{\Delta t}^N)$ is $\big\{c(\tau)~|~c(\tau)\text{ 
			is a periodic function in}~\mathbb{T}~\text{with period 1}\big \}$.
	\end{itemize}
 	
\begin{proof}
	By definition of $I_{\Delta t}$ and $I_{\tau,1+\tau}$ in Eq.\eqref{eqn:operatorflow} and Lemma \ref{lem:kernel}, we have $(I_{\Delta t})^N\phi(\tau,\cdot)\equiv I_{\tau,1+\tau}\phi(\tau,\cdot)$. 
	To prove the property (a), we need to show that the lower bound of the kernel $\tilde{\textbf{K}}_{\tau,\tau+1}$, which is defined 
	in the proof of Lemma \ref{lem:kernel}, does not depend on $\tau$.  For all $\tau\in\mathbb{T}$,
	$\textbf{X}^n=(x^{n}_1,x^{n}_2)^{T}\in\mathbb{T}^2$ and $\textbf{X}^{n+1}=(x^{n+1}_1,x^{n+1}_2)^{T}\in\mathbb{T}^2$, we pick $i_0=\lfloor -x^{n+1}_1+x^n_1+v_1(\tau+\frac{\Delta t}{2},x^n_2)\Delta t\rfloor$ and $j_0=\lfloor -x^{n+1}_2+x^n_2+v_2\big(\tau+\frac{\Delta t}{2},x^{n+1}_1-x^{n}_1-v_1(\tau+\frac{\Delta t}{2},x^n_2)\Delta t\big)\Delta t\rfloor$, where $\lfloor a\rfloor$ denotes the largest integer not greater than $a$. Applying to Eq.\eqref{eqn:tnkernel}, we can see that 
	\begin{align}
	&\tilde{K}_{\tau}\big(\textbf{X}^n,\textbf{X}^{n+1}\big)\geq \frac{1}{2\pi \sigma^2\Delta t}\cdot\nonumber\\&\resizebox{.9\hsize}{!}{$\exp\Bigg(
		-\frac{\Big(x^{n+1}_1+i_0-x^n_1-v_1(\tau+\frac{\Delta t}{2},x^n_2)\Delta t\Big)^2+\Big(x^{n+1}_2+j_0-x^n_2-v_2\big(\tau+\frac{\Delta t}{2},x^{n+1}_1-x^n_1-v_1(\tau+\frac{\Delta t}{2},x^n_2)\Delta t\big)\Delta t\Big)^2}{2\sigma^2\Delta t}\Bigg)$}\nonumber \\
	&\geq \frac{1}{2\pi \sigma^2\Delta t}\exp\big(-\frac{1}{\sigma^2\Delta t}\big)>0.
	\end{align}  
	According to the definition of the kernel $\tilde{\textbf{K}}_{\tau,\tau+1}$; see Eq.\eqref{eqn:defineKtauktau}, we know the minimal value of $\tilde{\textbf{K}}_{\tau,\tau+1}$ is above zero and is independent of $\tau$. Now, we apply this observation to Lemma \ref{lem:kernel} and conclude the proof of the property (a). The property (b) is a simple conclusion of the exponential decay property 
	proved in (a). For the property (c), we consider the equation $ I_{\Delta t}^N w=w$. Then, for a given time $\tau$, we have $I_{\tau,1+\tau}w(\tau,\cdot)=w(\tau,\cdot)$. Notice the fact that in Lemma \ref{lem:kernel} the invariant space of $I_{\tau,1+\tau}$ is constant in the spacial variable. Thus, we obtain $w=w(\tau)$.
\end{proof}
\end{theorem}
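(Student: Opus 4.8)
The plan is to derive all three items from Lemma~\ref{lem:kernel}, the only genuinely new point being to make the constants there uniform in the temporal variable $\tau$. The bridge is the identity $(I_{\Delta t})^N\phi(\tau,\cdot)=I_{\tau,1+\tau}\,\phi(\tau,\cdot)$: in $I_{\Delta t}$ the two half-step factors $\exp(\frac{\Delta t}{2}\mathcal{L}_1)$ of Eq.~\eqref{eqn:operatorflow} together advance the temporal slot by $\Delta t$ while the remaining factors act only on the spatial variable, so iterating $N=1/\Delta t$ times advances $\tau$ by a full period; using the $1$-periodicity of $\phi$ in $\tau$ together with the Markov (tower) property, the composition restricted to the slice $\{\tau\}\times\tilde{Y}$ is exactly the one-period flow operator $I_{\tau,1+\tau}$ of Lemma~\ref{lem:kernel}, and it depends on $\phi$ only through $\phi(\tau,\cdot)$. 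Hence $(I_{\Delta t}^N)^n$ restricted to the $\tau$-slice equals $(I_{\tau,1+\tau})^n$, and Lemma~\ref{lem:kernel} already gives the pointwise-in-$\tau$ version of (a).

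To upgrade this to the $\tau$-uniform estimate in (a), I would revisit the lower bound $\delta_\tau$ from the proof of Lemma~\ref{lem:kernel}. Starting from the one-step torus kernel $\tilde{K}_\tau$ in Eq.~\eqref{eqn:tnkernel}, a lattice sum of Gaussians, I keep only the summand indexed by $(i_0,j_0)$, the floors of the two relevant mean displacements; since $\textbf{v}$ is bounded and $\Delta t$ is a fixed number, each surviving exponent is then bounded by a constant depending only on $\sigma$ and $\Delta t$, which yields $\tilde{K}_\tau\ge\frac{1}{2\pi\sigma^2\Delta t}\exp\!\big(-\frac{1}{\sigma^2\Delta t}\big)>0$ with no dependence on $\tau$ nor on the states. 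Composing $N$ such one-step kernels as in Eq.~\eqref{eqn:defineKtauktau} and using compactness of $\tilde{Y}$ produces a single $\delta>0$ with $\tilde{\textbf{K}}_{\tau,\tau+1}\ge\delta$ for all $\tau$; feeding this $\delta$ into Prop.~\ref{pro:doobs} gives $C$ and $\rho$ depending only on $\delta$, and taking $\sup_\tau$, together with $\|\phi(\tau,\cdot)\|_{L_\infty}\le\|\phi\|_{L_\infty}$, closes (a).

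Item (b) is then immediate: once $\int_{\tilde{Y}}\phi(\tau,\cdot)\pi_\tau=0$, the general term $\mathbb{E}\phi(\tau,\textbf{X}^{N\tau+i})$ is dominated in absolute value by $C\|\phi\|_{L_\infty}e^{-\rho i}$ by (a), so the partial sums are Cauchy and the limit is finite. For (c), if $I_{\Delta t}^N w=w$ then the bridge identity gives $I_{\tau,1+\tau}\,w(\tau,\cdot)=w(\tau,\cdot)$ for every $\tau$, so by the kernel statement of Lemma~\ref{lem:kernel} the slice $w(\tau,\cdot)$ is constant in the spatial variable; writing $w(\tau,\textbf{x})=c(\tau)$ with $c$ a function on $\mathbb{T}$ (hence $1$-periodic) gives the inclusion $\subseteq$. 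Conversely, any such $c(\tau)$ is spatially constant, and $I_{\tau,1+\tau}$, being dual to a density transform, fixes spatial constants, so $I_{\Delta t}^N c=c$, which gives equality.

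The main obstacle is precisely the $\tau$-uniformity in (a): this is where the non-autonomous, parabolic-type generator forces one to control the whole family $\{I_{\tau,1+\tau}\}_{\tau\in\mathbb{T}}$ simultaneously rather than a single elliptic operator as in the time-independent case of \cite{Zhongjian2018sharp}, and the argument succeeds only because the lattice-sum Gaussian lower bound does not degenerate as $\tau$ varies --- which in turn rests on the boundedness of $\textbf{v}$ and on holding $\Delta t$ fixed. Everything else is bookkeeping around Lemma~\ref{lem:kernel} plus summation of a geometric series.
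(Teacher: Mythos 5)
Your proposal is correct and follows essentially the same route as the paper: the bridge identity $(I_{\Delta t})^N\phi(\tau,\cdot)=I_{\tau,1+\tau}\phi(\tau,\cdot)$, a $\tau$-uniform lower bound on the one-step torus kernel obtained by retaining the lattice summand indexed by the floors of the mean displacements (arriving at the identical bound $\frac{1}{2\pi\sigma^2\Delta t}\exp(-\frac{1}{\sigma^2\Delta t})$), composition over $N$ steps, and then Lemma \ref{lem:kernel} via Prop.\ \ref{pro:doobs} for (a), geometric summation for (b), and slice-wise spatial constancy for (c). The only additions beyond the paper's argument are cosmetic: a fuller justification of the bridge identity and the explicit converse inclusion in (c).
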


Before we close this subsection, we provide a convergence result for the inverse of operator sequences, 
which will be useful in our convergence analysis.
\begin{proposition}\label{prop:inverseoperator}
Let $X,Y$ denote two Banach spaces. Assume $T_{n }$, $T$ are bounded linear operators from  $X$ to $Y$, satisfying $\lim_{n \to \infty}||T_{n }-T||_{\mathcal{B}(X,Y)}=0$, and $T^{-1}\in\mathcal{B}(Y,X)$. Given $f\in Y$, if $T_{n}^{-1}f$, $n=1,2,...$ uniquely exist, then we have a convergence estimate as follows,
	\begin{align}\label{eqn:est_inverseoperator}
	\lim_{n \to \infty}\big|\big|(T_{n}^{-1}-T^{-1})f\big|\big|=0.
	\end{align}
\end{proposition}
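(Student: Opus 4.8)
The plan is to reduce everything to a single residual identity and then bootstrap. Set $x_n := T_n^{-1}f$ and $x := T^{-1}f$, so that $T_n x_n = f = T x$. The first step is to write
\begin{equation*}
T x_n - T x = T x_n - T_n x_n = (T - T_n)x_n,
\end{equation*}
and apply $T^{-1}\in\mathcal{B}(Y,X)$ to both sides to obtain the key estimate
\begin{equation*}
||x_n - x|| \le ||T^{-1}||\,||T - T_n||\,||x_n||.
\end{equation*}
So the whole proposition rests on controlling $||x_n||$ uniformly in $n$.

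The second step is exactly that bootstrap. Combining the triangle inequality $||x_n|| \le ||x_n - x|| + ||x||$ with the key estimate gives
\begin{equation*}
\bigl(1 - ||T^{-1}||\,||T - T_n||\bigr)\,||x_n|| \le ||x||.
\end{equation*}
Since $||T_n - T||_{\mathcal{B}(X,Y)} \to 0$, there is $N_0$ with $||T^{-1}||\,||T - T_n|| \le \tfrac12$ for all $n \ge N_0$, hence $||x_n|| \le 2\,||x||$ for such $n$. Plugging this back into the key estimate yields $||x_n - x|| \le 2\,||x||\,||T^{-1}||\,||T - T_n|| \to 0$, which is precisely \eqref{eqn:est_inverseoperator}.

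The only point requiring care — and it is mild — is that the hypotheses do not provide a uniform bound on the inverses $T_n^{-1}$ (these are assumed to act only on the fixed vector $f$, not to be bounded operators on all of $Y$), so the boundedness of the sequence $(x_n)$ is not available for free and must be extracted from the residual identity itself; this is the reason the estimate has to be run twice. No use of compactness, the Fredholm alternative, or the uniform boundedness principle is needed, so I expect no genuine obstacle beyond organizing these two applications of the triangle inequality.
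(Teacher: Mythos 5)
Your proof is correct. The paper itself does not write out an argument for this proposition; it only remarks that the proof is standard and points to Theorem 1.16 in Section IV of Kato's book, whose usual route is the Neumann-series perturbation argument: for $\|T-T_n\|<1/\|T^{-1}\|$ one factors $T_n=T\bigl(I-T^{-1}(T-T_n)\bigr)$, deduces that $T_n$ is boundedly invertible on all of $Y$ with $\|T_n^{-1}\|\le \|T^{-1}\|/(1-\|T^{-1}\|\|T-T_n\|)$, and then uses the resolvent-type identity $T_n^{-1}-T^{-1}=T_n^{-1}(T-T_n)T^{-1}$ to get convergence even in operator norm. Your argument is genuinely different in a way that matches the stated hypotheses more faithfully: since the proposition only assumes that the equation $T_nx_n=f$ is uniquely solvable at the single vector $f$ (not that $T_n^{-1}$ is a bounded operator on $Y$), you cannot invoke a uniform bound on $\|T_n^{-1}\|$ for free, and your residual identity $x_n-x=T^{-1}(T-T_n)x_n$ together with the bootstrap $\bigl(1-\|T^{-1}\|\|T-T_n\|\bigr)\|x_n\|\le\|x\|$ extracts the needed boundedness of $(x_n)$ from scratch. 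What the Neumann-series route buys is the stronger conclusion of operator-norm convergence of the inverses; what your route buys is an elementary, self-contained proof under the weaker pointwise hypothesis actually written in the proposition. Both are valid, and yours is arguably the cleaner fit for the statement as given.
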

The proof is quite standard. It can also be viewed as a modification of Theorem 1.16 in Section IV of \cite{kato2013perturbation}. 
 

\subsection{A discrete-type cell problem}\label{sec:ProbabilisticProof}
\noindent 
In the Eulerian framework, the periodic solution of the cell problem \eqref{CellProblem_EffectiveDiffusivity} and the corresponding formula for the effective diffusivity \eqref{Def_EffectiveDiffusivity_Euler} play a key role in studying the behaviors of chaotic and stochastic flows. In the Lagrangian framework,
we shall define a discrete analogue of  cell problem that enables us to compute the effective diffusivity. 
Let $\textbf{X}^0=(x_{1}^{0},x_{2}^{0})^T$ be the initial data and $\textbf{X}^n=(x_{1}^{n},x_{2}^{n})^T$ denote the numerical solution generated by the scheme \eqref{scheme} at $t_n=n\Delta t$, i.e. 
\begin{equation}\label{scheme_analysis}
\begin{cases}
x_1^{n}=x_1^{n-1}+v_1(t_{n-\frac{1}{2}},x_2^{n-1})\Delta t+\sigma N^{n-1}_1,\\
x_2^{n}=x_2^{n-1}+v_2\big(t_{n-\frac{1}{2}},x_1^{n-1}+v_1(t_{n-\frac{1}{2}},x_2^{n-1})\Delta t\big)\Delta t+\sigma N^{n-1}_2,
\end{cases}
\end{equation}
where $N^{n-1}_1=\sqrt{\Delta t}\xi_1$, $N^{n-1}_2=\sqrt{\Delta t}\xi_2$, and $\xi_1$, $\xi_2\sim\mathcal{N}(0,1)$ are i.i.d. normal random variables. For convenience we have replaced $n+1$ by $n$. 

First of all, we show that the solutions $x_1^n$ and $x^n_2$ obtained by the scheme \eqref{scheme_analysis} have bounded expectations if the initial values are bounded. Taking expectation of the first equation of Eq.\eqref{scheme_analysis} on both sides, we obtain 
\begin{align}
\mathbb{E}x_1^n=\mathbb{E}x_1^{n-1}+\Delta t \mathbb{E}v_1(t_{n-\frac{1}{2}},x_2^{n-1})=\mathbb{E}x_1^0+\Delta t\sum_{k=0}^{n-1}\mathbb{E}v_1(t_{k+\frac{1}{2}},x_2^{k}).
\label{bounded_Ep}
\end{align}
As a symplectic scheme in 2D, the numerical scheme \eqref{scheme_analysis} admits the uniform measure as its invariant measure.  Applying the results (a) and (b) of Theorem \ref{thm:main-per} and using the fact that $\textbf{v}$ is a periodic function with zero mean, we know that,
\begin{equation}\label{eqn:decay_expactation}
\sup_{\textbf{X}^0\in\tilde{Y}}|\mathbb{E}v_1(t_{k+\frac{1}{2}},\textbf{X}^{k})|\leq e^{-\rho k}C_N\sup_{m=1,2,...,N,\ \textbf{x}\in\mathbb{T}^2}||v_1(t_{m+\frac{1}{2}},\textbf{x})||_\infty.
\end{equation} 
Notice that $v_1(t_{k+\frac{1}{2}},\textbf{X}^{k})$ is equivalent to $v_1(t_{k+\frac{1}{2}},x_2^{k})$, since $v_1$ is indpentdent of $x_1^{k}$. By applying triangle inequalities in Eq.\eqref{bounded_Ep} and using the result in  Eq.\eqref{eqn:decay_expactation}, we arrive at,
\begin{equation}
   |\mathbb{E}x_1^n|\leq|\mathbb{E}x_1^0|+C_1||v_1||_\infty, \label{bounded_Ep2}
\end{equation}
where $C_1$ does not depend on $n$. Using the same approach, we know that expectation of the second component $\mathbb{E}x^n_2$ is also bounded.
	
Now, we are in the position to define the discrete-type cell problem. Starting at time $\tau$ with time step $\Delta t=\frac{1}{N}$, we denote the starting time index to be $N\tau$. Then, we define
\begin{equation}
\hat{v}_{1,N}(\tau,\textbf{x})=\Delta t\sum_{i=0}^{\infty}\mathbb{E}\big[v_1(t_{i+\frac{1}{2}}+\tau,\textbf{X}^{N\tau+i})|\textbf{X}^{N\tau}=\textbf{x}\big],
\label{discrete--cell-problem}
\end{equation}
where the summation is well defined due to the fact stated in Eq.\eqref{eqn:decay_expactation}. We will show that $\hat{v}_{1,N}(\tau,\textbf{x})$ satisfies the following properties. Namely, $\hat{v}_{1,N}(\tau,\textbf{x})$ is the solution of the discrete-type cell problem defined in Eq.\eqref{eqn:DiscreteCellProb}.
\begin{lemma}\label{lem:existenceofcellpro}
According to our assumption on $\textbf{v}$, we know that $v_1$ is a periodic function with zero mean on $\tilde{Y}$, $\forall \tau$, i.e., $\int_{\tilde{Y}}v_1=0$. Therefore, $\hat{v}_{1,N}(\tau,\textbf{x})$ is the unique solution in $\mathcal{B}_0(\mathbb{T}\times\tilde{Y})$ such that 
\begin{equation}\label{eqn:DiscreteCellProb}
\hat{v}_{1,N}(\tau,\textbf{x})=(I_{\Delta t}\hat{v}_{1,N})(\tau,\textbf{x})+\Delta tv_1(\tau+\frac{\Delta t}{2},\textbf{x}),
\end{equation}
where $\Delta t=\frac{1}{N}$ and the operator $I_{\Delta t}$ is defined in \eqref{eqn:operatorflow}. Moreover, $\hat{v}_{1,N}(\tau,\textbf{x})$ is smooth.
\begin{proof}
Throughout the proof, 
we shall use the fact that if $X$, $Y$ are random processes and $Y$ is measurable under a filtration $\mathcal{F}$, then with appropriate integrability assumption, we have 
		\begin{equation}
		\mathbb{E}[XY]=\mathbb{E}\Big[\mathbb{E}[XY|\mathcal{F}]\Big]=\mathbb{E}\Big[\mathbb{E}[X|\mathcal{F}]Y\Big].
		\end{equation}
		Some simple calculations will give that 
		\begin{align}
		\hat{v}_{1,N}(\tau,\textbf{x})-\Delta tv_1(\tau+\frac{\Delta t}{2},\textbf{x})=&\Delta t\sum_{i=1}^{\infty}\mathbb{E}\big[v_1(t_{i+\frac{1}{2}}+\tau,\textbf{X}^{N\tau+i})|\textbf{X}^{N\tau}=\textbf{x}\big]\nonumber\\
		=&\mathbb{E}\Big[\Delta t\sum_{i=1}^{\infty}\mathbb{E}\big[v_1(t_{i+\frac{1}{2}}+\tau,\textbf{X}^{N\tau+i})|\textbf{X}^{N\tau+1}\big]|\textbf{X}^{N\tau}=\textbf{x}\Big] \nonumber\\=& \mathbb{E}\big[\hat{v}_{1,N}(\tau+\Delta t,\textbf{X}^{N\tau+1})|\textbf{X}^{N\tau}=\textbf{x}\big].
		\label{DiscreteCellProblem1}
		\end{align}	
		Recall the definition of the operator $I_{\Delta t}$ in \eqref{eqn:operatorflow}, Eq.\eqref{DiscreteCellProblem1}
		implies that  
		\begin{equation}\label{cell_eqn}
		\hat{v}_{1,N}(\tau,\textbf{x})-\Delta tv_{1}(\tau+\frac{\Delta t}{2},\textbf{x})=(I_{\Delta t}\hat{v}_{1,N})(\tau,\textbf{x}).
		\end{equation}
		
Suppose we have that $I_{\Delta t}w=w$. Then, we get $(I_{\Delta t})^Nw=w$. According to Theorem \ref{thm:main-per}, we know that $w=0$ if $\int_{\tilde{Y}}w\mathrm{d}\textbf{x}=0$, $\forall t$. So $\ker(I_{\Delta t}-I_d)=\{0\}$ and $\hat{v}_{1,N}$ is unique.
Finally, by the definition of $\hat{v}_{1,N}$, we obtain that 
\begin{align}
\hat{v}_{1,N}(\tau,\textbf{x})=&\Delta t\sum_{i=0}^{\infty}\mathbb{E}\big[v_1(t_{i+\frac{1}{2}}+\tau,\textbf{X}^{N\tau+i})|\textbf{X}^{N\tau}=\textbf{x}\big]\nonumber\\
=&\Delta t\sum_{i=0}^{\infty} \int_{\tilde{Y}}v_1(t_{i+\frac{1}{2}}+\tau,\textbf{y})\tilde{K}_{\tau,\tau+i\Delta t} (\textbf{x},\textbf{y})\mathrm{d}\textbf{y},
\end{align}
which indicates that $\hat{v}_{1,N}$ has the same regularity as $v_1$ does. Notice  
the kernel $\tilde{K}_{\tau,\tau+i\Delta t} (\textbf{x},\textbf{y})$ has a fast decay property, which 
guarantees the order of the differentiation and  summation is interchangeable.  
	\end{proof}
\end{lemma}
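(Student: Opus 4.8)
The statement bundles five claims about $\hat v_{1,N}$ defined in \eqref{discrete--cell-problem}: the series converges to a bounded function; that function has zero average over $\tilde Y$ for every $\tau$ (hence lies in $\mathcal B_0(\mathbb T\times\tilde Y)$); it solves the discrete cell equation \eqref{eqn:DiscreteCellProb}; it is the only such solution; and it is smooth. The plan is to treat them in that order, with Theorem \ref{thm:main-per} and Lemma \ref{lem:kernel} supplying the quantitative input and the Markov property of the scheme \eqref{scheme_analysis} supplying the algebraic identities.

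First I would settle convergence and the $\mathcal B_0$ membership. For fixed $\tau$, the conditional expectation $\mathbb E\big[v_1(t_{i+\frac{1}{2}}+\tau,\textbf X^{N\tau+i})\mid\textbf X^{N\tau}=\textbf x\big]$ is exactly the $i$-step propagation of $v_1(t_{i+\frac{1}{2}}+\tau,\cdot)$ under the flow operator of the scheme. Since a 2D symplectic scheme preserves Lebesgue measure, the invariant measure $\pi_\tau$ of Theorem \ref{thm:main-per} is the uniform one, so $\int_{\tilde Y}v_1(t,\textbf x)\,\pi_\tau(\mathrm{d}\textbf{x})=\int_{\tilde Y}v_1(t,\textbf x)\,\mathrm{d}\textbf{x}=0$ by the mean-zero hypothesis \eqref{eqn:meanzero}. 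Splitting $i=qN+r$ with $0\le r<N$ and applying Theorem \ref{thm:main-per}(a) over the $q$ complete periods (absorbing the $r$ leftover substeps into a $\tau$-uniform constant) then gives $\sup_{\tau,\textbf x}\big|\mathbb E[v_1(t_{i+\frac{1}{2}}+\tau,\textbf X^{N\tau+i})\mid\textbf X^{N\tau}=\textbf x]\big|\le C_N\|v_1\|_\infty e^{-\rho\lfloor i\Delta t\rfloor}$, i.e.\ the estimate \eqref{eqn:decay_expactation}; hence the series converges absolutely and uniformly and $\hat v_{1,N}\in\mathcal B(\mathbb T\times\tilde Y)$. Integrating termwise — legitimate by this absolute summability — and using that the density evolution of a volume-preserving map fixes the uniform density, so that $\int_{\tilde Y}\tilde{K}_{\tau,\tau+i\Delta t}(\textbf x,\textbf y)\,\mathrm{d}\textbf{x}=1$, gives $\int_{\tilde Y}\hat v_{1,N}(\tau,\textbf x)\,\mathrm{d}\textbf{x}=\Delta t\sum_i\int_{\tilde Y}v_1(t_{i+\frac{1}{2}}+\tau,\textbf y)\,\mathrm{d}\textbf{y}=0$. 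A parallel computation using the temporal $1$-periodicity of $v_1$ shows $\hat v_{1,N}(\tau+1,\cdot)=\hat v_{1,N}(\tau,\cdot)$, so $\hat v_{1,N}\in\mathcal B_0(\mathbb T\times\tilde Y)$.

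Next I would verify the cell equation from the Markov structure: the $i=0$ term of the series is the deterministic quantity $\Delta t\,v_1(\tau+\frac{\Delta t}{2},\textbf x)$, and on the remaining tail I would insert a conditioning on $\textbf X^{N\tau+1}$ via the tower property and invoke the Markov property, recognizing the resulting inner sum as the defining series of $\hat v_{1,N}$ at base time $\tau+\Delta t$, i.e.\ as $\hat v_{1,N}(\tau+\Delta t,\textbf X^{N\tau+1})$; then $\mathbb E\big[\hat v_{1,N}(\tau+\Delta t,\textbf X^{N\tau+1})\mid\textbf X^{N\tau}=\textbf x\big]=(I_{\Delta t}\hat v_{1,N})(\tau,\textbf x)$ by the very definition \eqref{eqn:operatorflow} of the one-step flow operator, which is \eqref{eqn:DiscreteCellProb}. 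Uniqueness follows from Theorem \ref{thm:main-per}: the difference $w\in\mathcal B_0$ of two solutions satisfies $w=I_{\Delta t}w$, hence $w=(I_{\Delta t})^N w=I_{\tau,1+\tau}w(\tau,\cdot)$, so by Lemma \ref{lem:kernel} (equivalently Theorem \ref{thm:main-per}(c)) $w$ is spatially constant, and the zero-average constraint forces $w\equiv0$. For smoothness I would use the kernel representation $\hat v_{1,N}(\tau,\textbf x)=\Delta t\sum_i\int_{\tilde Y}v_1(t_{i+\frac{1}{2}}+\tau,\textbf y)\,\tilde{K}_{\tau,\tau+i\Delta t}(\textbf x,\textbf y)\,\mathrm{d}\textbf{y}$: each $\tilde{K}_{\tau,\tau+i\Delta t}$ is a finite composition of the Gaussian lattice-sum kernels \eqref{eqn:tnkernel} and is $C^\infty$ in $(\tau,\textbf x)$, differentiation under the integral meets only smooth objects, and because the geometric-in-$i$ decay persists under differentiation the differentiated series still converges uniformly; hence term-by-term differentiation is justified and $\hat v_{1,N}$ inherits the regularity of $v_1$.

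The step I expect to be the real obstacle is making the decay estimate honest through the re-indexing that aligns the temporal period with an arbitrary start time $\tau$: within a single period the chain $\{\textbf X^{N\tau+i}\}_i$ is time-inhomogeneous, so an $i$-step transition has to be decomposed into whole periods plus a sub-period remainder, and one must check that Theorem \ref{thm:main-per}(a) applies with constants $C,\rho$ genuinely uniform in $\tau$ — which is precisely where the $\tau$-independent lower bound on the kernel $\tilde{\textbf{K}}_{\tau,\tau+1}$ proved in that theorem is used. Secondary fiddly points are confirming that $\hat v_{1,N}$ is a well-defined, measurable, $1$-periodic function on $\mathbb T\times\tilde Y$ and justifying that $\partial_\tau$ commutes with the infinite sum even though $\tau$ appears simultaneously in the time argument of $v_1$ and in the transition kernel.
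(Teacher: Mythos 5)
Your proposal is correct and follows essentially the same route as the paper: the tower/Markov property yields the discrete cell equation, triviality of $\ker(I_{\Delta t}-I_d)$ on zero-mean functions (via Theorem \ref{thm:main-per} and Lemma \ref{lem:kernel}) gives uniqueness, and the kernel representation with the decaying transition densities gives smoothness. The only difference is that you fold the absolute convergence of the defining series and the $\mathcal{B}_0$ membership into the proof itself, whereas the paper establishes these (via the decay estimate \eqref{eqn:decay_expactation}) in the text immediately preceding the lemma.
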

\begin{remark}
Notice that $v_1$ and $\hat{v}_{1,N}$ only depend on the second component of the numerical solution $\textbf{X}^n=(x_{1}^{n},x_{2}^{n})^T$. However, we will write $v_1$ and $\hat{v}_{1,N}$ as functions of $\textbf{X}^n$ when we view $\textbf{X}^n$ as a Markov process in the convergence analysis.  
\end{remark}	
\begin{remark}
When the flow is time-independent, we obtain 
\begin{equation}
\mathbb{E}[\hat{v}_{1,N}(\textbf{X}^{n+1})|\textbf{X}^n]-\hat{v}_{1,N}(\textbf{X}^n)=-\Delta tv_1(\textbf{X}^n),\quad a.s.\quad\forall n\in \mathbb{N}.
\end{equation}
Therefore, the discrete-type cell problem defined in \eqref{eqn:DiscreteCellProb} is a generalization of 
the discrete-type cell problem for time-independent flow problems studied in our previous work \cite{Zhongjian2018sharp}, although technically it is more involved.
\end{remark}


In the remaining part of this paper, we only need the result that $\hat{v}_{1,N}(\tau,\textbf{x})$ is unique in an H\"{o}lder space $\mathbb{C}^{p_1,p_2,\alpha}_0(\mathbb{T}\times\tilde{Y})\subsetneq\mathcal{B}(\mathbb{T}\times\tilde{Y})$. To be precise, given a smooth drift function $v_1$, $\hat{v}_{1,N}(\tau,\textbf{x})$ will be in $\mathbb{C}^{p_1,p_2,\alpha}_0(\tilde{Y})$, where $p_1\geq 2, p_2\geq 6, 0<\alpha<1$ and the subscript index $0$ indicates that it is a subspace with zero-mean functions. 


\subsection{Convergence estimate of the discrete-type cell problem}\label{sec:ConvergenceDiscreteProblem} 
\noindent
In this section, we shall prove that the solution $\hat{v}_{1,N}(\tau,\textbf{x})$ of the discrete-type cell problem (i.e., Eq.\eqref{eqn:DiscreteCellProb}) converges to the solution of a continuous cell problem in certain subspace. Here, we choose the space $\mathbb{C}^{2,6,\alpha}_0(\mathbb{T}^1\times\tilde{Y})$ to carry out our analysis. However, there is no requirement that we have to choose this one. In fact, any space that has certain regularity (belongs to the domain of the operator $\mathcal{L}$) will work. 
Notice that the continuous cell problem \eqref{CellProblem_EffectiveDiffusivity} is defined for a vector function, where the first component satisfies   
\begin{equation}\label{continuous_cellproblem}
\mathcal{L}\chi_1=-v_1. 
\end{equation}
For the two-dimensional problem, the operator $\mathcal{L}$ is defined in Eq.\eqref{HamiltonianFlowOperator}. 
Given the fact that $v_1$ is a smooth function defined on $\mathbb{T}^1\times\tilde{Y}$, which satisfies $\int_{\tilde{Y}}v_1(\tau,\textbf{x})\mathrm{d}\textbf{x}=0,\ \forall\tau\in\mathbb{T}^1$. Then, Eq.\eqref{continuous_cellproblem} admits a unique solution $\chi_1$ in $\mathbb{C}_0^{2,6,\alpha}(\mathbb{T}^1\times\tilde{Y})$. This is a standard result of parabolic PDEs in H\"{o}lder space (see, e.g., the Theorem 8.7.3 in \cite{krylov1996lectures}). 
The following theorem states that under certain conditions the solution of the discrete-type cell problem
converges to the solution of the continuous one.  
\begin{theorem}	\label{thm:disc-to-cont}
Assume $v_1$ is a smooth function defined on $\mathbb{T}^1\times\tilde{Y}$, satisfying $\int_{\tilde{Y}}v_1(\tau,\textbf{x})\mathrm{d}\textbf{x}=0,\ \forall\tau\in\mathbb{T}^1$. 
Let $\hat{v}_1$ and $\chi_1$ be the solutions of the discrete-type cell problem \eqref{eqn:DiscreteCellProb} and continuous cell problem \eqref{continuous_cellproblem}, respectively. Then, we have the following convergence estimate holds
\begin{equation}\label{ConvergenceResult_ftochi}
||\chi_1-\hat{v}_1||=\mathcal{O}(\Delta t),
\end{equation}
where $||\cdot||$ is a function norm associated with the space $\mathbb{C}_0^{2,6,\alpha}(\mathbb{T}^1\times\tilde{Y})$.
\end{theorem}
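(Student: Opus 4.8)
The plan is to realize both $\hat v_1$ and $\chi_1$ as images of fixed operators acting on the mean-zero subspace $\mathbb{C}^{2,6,\alpha}_0(\mathbb{T}^1\times\tilde{Y})$ (mean-zero with respect to the invariant uniform measure $\pi_\tau$), and then to run a consistency-plus-stability argument in which the stability is the exponential ergodic decay of Theorem \ref{thm:main-per}. On the continuous side $\chi_1=(-\mathcal{L})^{-1}v_1$ is already available from the anisotropic Schauder theory quoted before the statement, and since $v_1$ is smooth a bootstrap gives $\chi_1\in C^\infty$. On the discrete side, Lemma \ref{lem:existenceofcellpro} and Theorem \ref{thm:main-per}(c) make $I_d-I_{\Delta t}$ invertible on the mean-zero subspace, with the Neumann-type representation
\[
\hat v_1=(I_d-I_{\Delta t})^{-1}\big[\Delta t\,v_1(\tau+\tfrac{\Delta t}{2},\cdot)\big]=\Delta t\sum_{i\ge 0}I_{\Delta t}^{\,i}\,v_1(\tau+\tfrac{\Delta t}{2},\cdot),
\]
the series converging because $I_{\Delta t}^{N}$ contracts mean-zero functions.

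I would then prove a consistency estimate by inserting $\chi_1$ into the discrete cell problem and bounding the defect. Splitting $(I_d-I_{\Delta t})\chi_1=(I_d-e^{\Delta t\mathcal{L}})\chi_1+(e^{\Delta t\mathcal{L}}-I_{\Delta t})\chi_1$, the first term equals $\int_0^{\Delta t}e^{s\mathcal{L}}v_1\,\mathrm{d}s$ because $\mathcal{L}\chi_1=-v_1$, and expanding in $s$ it differs from $\Delta t\,v_1(\tau+\tfrac{\Delta t}{2},\cdot)$ by $\tfrac{\Delta t^2}{2}(\textbf{v}\cdot\nabla v_1+D_0\Delta v_1)+\mathcal{O}(\Delta t^3)$; the second term is the local splitting error, which is $\mathcal{O}(\Delta t^2)$ by the Baker--Campbell--Hausdorff / backward-error analysis of Section \ref{sec:KnownFacts} (the scheme \eqref{eqn:operatorflow} being first-order consistent). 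Smoothness of $\textbf{v}$, hence of $\chi_1$, makes every implied constant finite and dependent on only finitely many Hölder norms, so $(I_d-I_{\Delta t})\chi_1=\Delta t\,v_1(\tau+\tfrac{\Delta t}{2},\cdot)+\Delta t^2R_{\Delta t}$ with $R_{\Delta t}$ mean-zero and $\|R_{\Delta t}\|_{\mathbb{C}^{2,6,\alpha}}$ bounded uniformly in $\Delta t$. Subtracting the equation for $\hat v_1$ gives $(I_d-I_{\Delta t})(\chi_1-\hat v_1)=\Delta t^2R_{\Delta t}$, so $\chi_1-\hat v_1=\Delta t^2\sum_{i\ge0}I_{\Delta t}^{\,i}R_{\Delta t}$, and the theorem reduces to a single uniform bound: $\Delta t\sum_{i\ge0}\|I_{\Delta t}^{\,i}g\|_{\mathbb{C}^{2,6,\alpha}}\le C\|g\|_{\mathbb{C}^{2,6,\alpha}}$ for mean-zero $g$ with $C$ independent of $\Delta t$. (Alternatively one can package this step via $A_{\Delta t}:=\tfrac{1}{\Delta t}(I_d-I_{\Delta t})$, use Proposition \ref{prop:inverseoperator} to get $A_{\Delta t}^{-1}\to(-\mathcal{L})^{-1}$ and hence a uniform bound on $\|A_{\Delta t}^{-1}\|$ for small $\Delta t$, and read off the rate from the resolvent identity $A_{\Delta t}^{-1}-(-\mathcal{L})^{-1}=A_{\Delta t}^{-1}\big((-\mathcal{L})-A_{\Delta t}\big)(-\mathcal{L})^{-1}$.)

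For the uniform bound I split the sum at $i=N=1/\Delta t$. For $i<N$ I use that $I_{\Delta t}$ is a small perturbation of the identity on $\mathbb{C}^{2,6,\alpha}$: each factor of \eqref{eqn:operatorflow} is bounded there, the heat semigroup and the time shift with norm $\le1$, the two shear transports with norm $1+\mathcal{O}(\Delta t)$, so $\|I_{\Delta t}\|_{\mathcal{B}(\mathbb{C}^{2,6,\alpha})}\le1+C\Delta t$ and $\Delta t\sum_{i<N}\|I_{\Delta t}^{\,i}g\|_{\mathbb{C}^{2,6,\alpha}}\le e^{C}\|g\|_{\mathbb{C}^{2,6,\alpha}}$. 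For $i\ge N$ I peel off one full period, $I_{\Delta t}^{\,i}=I_{\tau,1+\tau}\circ I_{\Delta t}^{\,i-N}$, and combine the exponential decay $\|I_{\Delta t}^{\,j}g\|_{L^\infty}\le Ce^{-\rho\Delta t\,j}\|g\|_{L^\infty}$ on mean-zero functions from Theorem \ref{thm:main-per}(a) with a uniform-in-$\Delta t$ smoothing estimate $\|I_{\tau,1+\tau}\|_{\mathcal{B}(L^\infty,\,\mathbb{C}^{2,6,\alpha})}\le C$ (temporal derivatives of the argument being controlled trivially from smoothness of $\textbf{v}$); this yields $\Delta t\sum_{i\ge N}\|I_{\Delta t}^{\,i}g\|_{\mathbb{C}^{2,6,\alpha}}\le C\|g\|_{L^\infty}\Delta t\sum_{i\ge N}e^{-\rho\Delta t\,i}\le C'\|g\|_{L^\infty}$ for $\Delta t$ small. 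Adding the two ranges and using $\|g\|_{L^\infty}\le\|g\|_{\mathbb{C}^{2,6,\alpha}}$ completes the bound, hence $\|\chi_1-\hat v_1\|_{\mathbb{C}^{2,6,\alpha}}=\mathcal{O}(\Delta t)$.

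The hard part is the uniform-in-$\Delta t$ smoothing estimate $\|I_{\tau,1+\tau}\|_{\mathcal{B}(L^\infty,\mathbb{C}^{2,6,\alpha})}\le C$: one must show that the $N$-fold composition of the one-step periodized Gaussian kernels $\tilde K_{\tau+m\Delta t}$ of \eqref{eqn:tnkernel}--\eqref{eqn:defineKtauktau} --- which over one unit period plays the role of the transition density of the continuous diffusion, of variance $\mathcal{O}(1)$ --- is smooth in $(\tau,\textbf{x})$ with all derivatives needed for $\mathbb{C}^{2,6,\alpha}$ bounded uniformly as $\Delta t\to0$. This is the discrete analogue of the interior Schauder / heat-kernel gradient estimate for $\mathcal{L}$; quantifying it requires carefully tracking how spatial differentiation interacts with the repeated drift shifts and the variance-$\sigma^2\Delta t$ convolutions (moving derivatives off the initial slot by integration by parts and spreading them across many steps, so the $\mathcal{O}(\Delta t^{-1/2})$ cost of differentiating one step's Gaussian is never paid). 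This is where $\sigma>0$, the smoothness of $\textbf{v}$, and the volume-preserving form of the scheme enter; once this and the companion bound $\|I_{\Delta t}\|_{\mathcal{B}(\mathbb{C}^{2,6,\alpha})}\le1+C\Delta t$ are secured, the rest --- the consistency expansion and the geometric summation --- is routine.
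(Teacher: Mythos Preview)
Your parenthetical alternative --- defining $A_{\Delta t}=\tfrac{1}{\Delta t}(I_d-I_{\Delta t})$, showing $A_{\Delta t}\to-\mathcal{L}$ in operator norm via BCH, invoking Proposition~\ref{prop:inverseoperator}, and extracting the rate from a resolvent identity --- is essentially the paper's proof. The paper writes $\tilde L_3=\tfrac{I_{\Delta t}-I_d}{\Delta t}$, splits it further as $\tilde L_1+\tilde L_2$ with $\tilde L_1=\tfrac{e^{\Delta t\mathcal L}-I_d}{\Delta t}$ and $\tilde L_2=\tfrac{I_{\Delta t}-e^{\Delta t\mathcal L}}{\Delta t}$, establishes $\tilde L_3\to\mathcal L$ in $\mathcal B(\mathbb C_0^{2,6,\alpha},\mathbb C_0^{1,4,\alpha})$ via BCH, applies Proposition~\ref{prop:inverseoperator} to get $\hat v_1\to\chi_1$, and then reads the $\mathcal O(\Delta t)$ rate from the identity $\mathcal L(\chi_1-\hat v_1)=(\mathcal L-\tilde L_1)(\chi_1-\hat v_1)+\tilde L_2\hat v_1+(v_1-\bar v_1)$.

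Your main Neumann-series route is genuinely different, and the difficulty you flag is real. Note that the constants $C,\rho$ in Theorem~\ref{thm:main-per}(a) are asserted only for \emph{fixed} $\Delta t$: the Doeblin lower bound used there is $\tfrac{1}{2\pi\sigma^2\Delta t}e^{-1/(\sigma^2\Delta t)}$ per step, and nothing in the paper shows the resulting $\rho$ stays bounded away from zero as $\Delta t\to0$. Your tail estimate $\Delta t\sum_{i\ge N}e^{-\rho\Delta t\,i}\le C'$ needs precisely that uniformity, and your uniform smoothing bound $\|I_{\tau,1+\tau}\|_{\mathcal B(L^\infty,\mathbb C^{2,6,\alpha})}\le C$ is likewise an extra ingredient the paper never develops. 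Both are plausible (they should inherit from the continuous-time estimates for $e^{t\mathcal L}$), but they are genuine additional claims. The paper's route sidesteps both by proving $\|\mathcal L-\tilde L_3\|=\mathcal O(\Delta t)$ directly in the H\"older-to-H\"older operator norm, so that stability comes for free from the boundedness of $\mathcal L^{-1}$ rather than from discrete ergodic or smoothing constants. What your approach would buy, were those uniform constants secured, is a more explicit estimate with transparent dependence on the continuous spectral gap.
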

		
\begin{proof}
Using Prop. \ref{prop:inverseoperator}, one can easily verify that $\mathcal{L}$ is a bijection between two Banach spaces $\mathbb{C}_0^{2,6,\alpha}(\mathbb{T}^1\times\tilde{Y})$ and $\mathbb{C}_0^{1,4,\alpha}(\mathbb{T}^1\times\tilde{Y})$ and its inverse is bounded.
Integrating Eq.\eqref{continuous_cellproblem} along time gives, 
\begin{align}\label{eqn:asymcell}
\exp (\Delta t \mathcal{L})\chi_1-\chi_1=-v_1\Delta t +\mathcal{O}\big((\Delta t)^2\big)\equiv-\Delta t\bar{v}_1,
\end{align}
where $\bar{v}_1=v_1+O(\Delta t)$. Combining Eqns.\eqref{cell_eqn} and \eqref{eqn:asymcell}, we obtain
\begin{equation}\label{eqn:derivationform}
\exp (\Delta t \mathcal{L})\chi_1 - I_{\Delta t} \hat{v}_1 
-(\chi_1 - \hat{v}_1  ) = \Delta t(v_1-\bar{v}_1).
\end{equation}
Notice that Eq.\eqref{eqn:derivationform} shows the connection between $\chi_1$ and $\hat{v}_1$. 
After some simple calculations, we get that  
\begin{equation}\label{eqn:balancedform}
\mathcal{L}(\chi_1-\hat{v}_1)=(\mathcal{L}-\tilde{L}_1)(\chi_1-\hat{v}_1)+\tilde{L}_2\hat{v}_1+(v_1-\bar{v}_1),
\end{equation}
where
\begin{equation}
\tilde{L}_1=\frac{\exp (\Delta t \mathcal{L})-I_d}{\Delta t},
\quad\text{and} \quad 
\tilde{L}_2=\frac{I_{\Delta t}-\exp (\Delta t \mathcal{L})}{\Delta t}.
\end{equation}
Moreover, we can verify that in the space of bounded linear operators from $\mathbb{C}_0^{2,6,\alpha}(\tilde{Y})$ to $\mathbb{C}_0^{1,4,\alpha}(\tilde{Y})$, there is a strong convergence in the operator norm $||\cdot||$, 
\begin{equation}\label{conv:operatorstrong}
||\mathcal{L}-\tilde{L}_1||=\mathcal{O}(\Delta t)\quad \text{as } \Delta t\to 0.
\end{equation}
For the operator $\tilde{L}_2$, noticing that $\mathcal{L}=\mathcal{L}_1+\mathcal{L}_2+\mathcal{L}_3+\mathcal{L}_4$ and operator $I_{\Delta t}$ is defined in \eqref{eqn:operatorflow}, we can use the BCH formula and obtain 
\begin{small}
\begin{align}\label{convergenceofhatf2}
\tilde{L}_2=& \frac{\exp\Big(\frac{(\Delta t)^2}{2}\big([\mathcal{L}_4,\mathcal{L}_3]+[\mathcal{L}_4,\mathcal{L}_2]+[\mathcal{L}_4,\mathcal{L}_1]+[\mathcal{L}_3,\mathcal{L}_2]+[\mathcal{L}_2,\mathcal{L}_1]+[\mathcal{L}_3,\mathcal{L}_1]\big)+\mathcal{O}(\Delta t)^3\Big)-I_{d}}{\Delta t}\cdot \exp (\Delta t \mathcal{L}) \nonumber\\
\to& \frac{\Delta t}{2}\big(([\mathcal{L}_4,\mathcal{L}_3]+[\mathcal{L}_4,\mathcal{L}_2]+[\mathcal{L}_4,\mathcal{L}_1]+[\mathcal{L}_3,\mathcal{L}_2]+[\mathcal{L}_2,\mathcal{L}_1]+[\mathcal{L}_3,\mathcal{L}_1]\big) +\mathcal{O}
\big((\Delta t)^2\big).
\end{align}
\end{small}
Denoting $\tilde{L}_3\equiv\tilde{L}_1+\tilde{L}_2= \frac{I_{\Delta t}-I_d}{\Delta t}$, we have  $\tilde{L}_3\to \mathcal{L}$ in $\mathcal{B}\big(\mathbb{C}_0^{2,6,\alpha}(\mathbb{T}^1\times\tilde{Y}),\mathbb{C}_0^{1,4,\alpha}(\mathbb{T}^1\times\tilde{Y})\big)$ as $\Delta t$ approaches zero. Then, applying the Prop. \ref{prop:inverseoperator}, we get,
\begin{equation}\label{convergenceofhatf}
 \lim_{\Delta t \to 0}\hat{v}_{1}=\lim_{\Delta t \to 0}\tilde{L}_3^{-1}(-v_1)=\mathcal{L}^{-1}(-v_1)=\chi_1.
\end{equation}
In addition, combining the results of the Eqns.\eqref{eqn:asymcell}, \eqref{conv:operatorstrong}, \eqref{convergenceofhatf2} and \eqref{convergenceofhatf} for the right hand side of Eq.\eqref{eqn:balancedform}, we know that when $\Delta t$ is small enough, the assertion in 
\eqref{ConvergenceResult_ftochi} is proved. The constant in the $\mathcal{O}(\Delta t)$ of \eqref{ConvergenceResult_ftochi} does not depend on the total computational time $T$, but may depend on the regularities of $v_1$, $v_2$ and the constant $\sigma$.
\end{proof}
\subsection{Convergence analysis for the effective diffusivity}\label{sec:EstimateEffectiveDiffusivity}
\noindent
This section contains the main results of our convergence analysis. We first prove that the second-order moment
of the solution obtained by using our numerical scheme has an (at most) linear growth rate. Secondly, we provide the convergence rate of our numerical method in computing the effective diffusivity.
\begin{theorem}\label{thm:boundness}
Let $\textbf{X}^n=(x^n_1,x^n_2)^{T}$ denote the solution of the two-dimensional passive tracer model \eqref{eqn:particleSDE} obtained by using our numerical scheme \eqref{scheme_analysis} with time step $\Delta t$.  If the Hamiltonian function $H(t,x_1,x_2)$ is separable, periodic and smooth (in order to guarantee the existence and uniqueness of the solution to the SDE \eqref{eqn:particleSDE}), then we can prove that the second-order moment of the solution $\textbf{X}^n$ (which can be viewed as a discrete Markov process) is at most linear growth, i.e.,
\begin{equation}\label{conj}
\max_n\big\{\mathbb{E}\frac{||\textbf{X}^n||^2}{n}\big\}\ \text{is bounded.}
\end{equation}
\begin{proof} 
We first estimate the second-order moment of the first component of $\textbf{X}^n=(x^n_1,x^n_2)^{T}$, since the other one can be estimated in the same manner. Simple calculations show that 
\begin{align}
\mathbb{E}[(x^n_1)^2|(x_1^{n-1},x_2^{n-1})]&=\mathbb{E}\big(x_1^{n-1}+v_1(t_{n-\frac{1}{2}},x_2^{n-1})\Delta t+\sigma N_1^{n-1}\big)^2\nonumber\\
&=\mathbb{E}(x_1^{n-1})^2+\Delta t \big(\sigma^2+2\mathbb{E}[x_1^{n-1}v_1(t_{n-\frac{1}{2}},x_2^{n-1})]\big)+(\Delta t)^2\mathbb{E}v_1^2(t_{n-\frac{1}{2}},x_2^{n-1}).
\end{align}
The term $\mathbb{E}[x_1^{n-1}v_1(t_{n-\frac{1}{2}},x_2^{n-1})]$ corresponds to the strength of the convection-enhanced diffusion. Our goal here is to prove that it is bounded over $n$, though it may depend on $v_1$, $v_2$ and $\sigma$. 
{\color{black}Notice that we are calculating the expectation of $(x_1^n)^2$, which is not defined in the torus space. But in the following derivation, we will show that it can be decomposed into sums of periodic functions acting on $\textbf{X}^n=(x_1^n,x_2^n)^T$. Hence after the decomposition (see Eq.\eqref{eqn:Ex1x1overn}) we can still apply the previous analysis on torus space.}

		
We now directly compute the contribution of the term $\mathbb{E}[x_1^{n-1}v_1(t_{n-\frac{1}{2}},x_2^{n-1})]$ to the effective diffusivity with the help of Eq.\eqref{DiscreteCellProblem1},  
		\begin{align}\label{ComputeEpfq}
		\Delta t\sum_{i=0}^{n-1}\mathbb{E}[x_1^iv_1(t_{i+\frac{1}{2}},x_2^i)]=\sum_{i=0}^{n-1}\mathbb{E}\big[x_1^i \big(\hat{v}_1(t_i,\textbf{X}^i)-\mathbb{E}[\hat{v}_1(t_{i+1},\textbf{X}^{i+1})|\textbf{X}^i]\big)\big].
		\end{align}
		Let $\mathcal{F}_i$ denote the filtration generated by the solution process until $\textbf{X}^i$. Notice that $x_1^i\in\mathcal{F}_i$. For the Eq.\eqref{ComputeEpfq}, we have
		\begin{align}
		\text{RHS}&=\sum_{i=0}^{n-1}\mathbb{E}\big[x_1^i \big(\hat{v}_1(t_i,\textbf{X}^i)-\hat{v}_1(t_{i+1},\textbf{X}^{i+1})\big)\big]
		\nonumber\\
		&=\sum_{i=1}^{n}\mathbb{E}\big[\hat{v}_1(t_i,\textbf{X}^i)(x_1^{i}-x_1^{i-1})\big]+\hat{v}_1(t_0,\textbf{X}^0)x_1^0-\mathbb{E}[\hat{v}_1(t_n,\textbf{X}^n)x^n_1]\nonumber\\
		&= \sum_{i=1}^{n}\mathbb{E}\big[\hat{v}_1(t_i,\textbf{X}^i)\big(v_1(t_{i-\frac{1}{2}},x_2^{i-1})\Delta t+\sigma N^{i-1}_1\big)\big]+\hat{v}_1(t_0,\textbf{X}^0)x_1^0-\mathbb{E}[\hat{v}_1(t_n,\textbf{X}^n)x^n_1].
		\end{align}
		Hence, we obtain the following result
		\begin{align}
		\frac{1}{n}\mathbb{E}\big[(x^n_1)^2|(x_1^{0},x_2^{0})\big]=&\frac{1}{n}(x_1^0)^2+\Delta t\sigma^2+2\Delta t\frac{1}{n}\sum_{i=0}^{n-1}\mathbb{E}[x_1^iv_1(t_{i+\frac{1}{2}},x_2^i)]+(\Delta t)^2\frac{1}{n}\sum_{i=0}^{n-1}\mathbb{E}v_1^2(t_{i+\frac{1}{2}},x_2^i)\nonumber\\
		=&\frac{1}{n}(x_1^0)^2+\Delta t\sigma^2+(\Delta t)^2\frac{1}{n}\sum_{i=0}^{n-1}\mathbb{E}v_1^2(t_{i+\frac{1}{2}},x_2^i)
		\nonumber\\
		&+\frac{2}{n}\sum_{i=1}^{n}\mathbb{E}\big[\hat{v}_1(t_i,\textbf{X}^i)\big(v_1(t_{i-\frac{1}{2}},x_2^{i-1})\Delta t+\sigma N^{i-1}_1\big)\big]\nonumber\\
		&+\frac{2}{n}\big(\hat{v}_1(t_0,\textbf{X}^0)x_1^0-\mathbb{E}[\hat{v}_1(t_n,\textbf{X}^n)x^n_1]\big).
		\label{eqn:Ex1x1overn}
		\end{align}
		By using the Cauchy-Schwarz inequality, we know the term  
		\begin{align}
		&\frac{2}{n}\sum_{i=1}^{n}\mathbb{E}\big[\hat{v}_1(t_i,\textbf{X}^i)\big(v_1(t_{i-\frac{1}{2}},x_2^{i-1})\Delta t+\sigma N^{i-1}_1\big)\big]\nonumber\\
		\leq&\frac{2}{n}\sum_{i=1}^{n}\mathbb{E}\big[2(\hat{v}_1(t_i,\textbf{X}^i))^2+\big((v_1(t_{i-\frac{1}{2}},x_2^{i-1})\Delta t)^2+(\sigma N^{i-1}_1)^2\big)\big]\nonumber\\
		=&\frac{2}{n}\sum_{i=1}^{n}\mathbb{E}\big[2(\hat{v}_1(t_i,\textbf{X}^i))^2+(v_1(t_{i-\frac{1}{2}},x_2^{i-1}))^2(\Delta t)^2+\sigma^2\Delta t\big]\label{eqn:var}.
		\end{align}
		 Notice that if $v_1$ and $\hat{v}_1$ are bounded in sup norm, right-hand-side of Eq.\eqref{eqn:var} is bounded for any $n$. Other terms on the right-hand side of Eq.\eqref{eqn:Ex1x1overn} are also bounded, which can be checked easily. 
		 Therefore, we can prove that $\frac{1}{n}\mathbb{E}\big[(x^n_1)^2|(x_1^{0},x_2^{0})\big]$ is bounded.  Repeat the same trick, we know that $\frac{1}{n}\mathbb{E}\big[(x^n_2)^2|(x_1^{0},x_2^{0})\big]$ is also bounded. Thus, the assertion in Eq.\eqref{conj} is proved. 
	\end{proof}
\end{theorem}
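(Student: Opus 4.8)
The plan is to isolate the single dangerous term in the expansion of the second moment --- the convection cross term --- reduce it to boundary and martingale-type contributions via the discrete corrector, and then close the estimate with a self-improving quadratic inequality. By the symmetry of the scheme \eqref{scheme_analysis} it suffices to control $a_n:=\mathbb{E}(x_1^n)^2$; the bound for $\mathbb{E}(x_2^n)^2$ follows verbatim after replacing $\hat v_{1,N}$ by the corrector $\hat v_{2,N}$ associated with $v_2$, whose existence, smoothness and boundedness on the compact torus are guaranteed by Lemma \ref{lem:existenceofcellpro} (the zero-diagonal-Jacobian assumption makes $v_2(\tau,\cdot)$ mean-zero on $\tilde{Y}$ for every $\tau$).

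First I would square the recursion for $x_1^n$, take conditional expectation with respect to $\mathcal{F}_{n-1}$, then full expectation, and telescope in $n$. This writes $a_n$ as the sum of an initial term $(x_1^0)^2$, the pure-diffusion term $n\Delta t\,\sigma^2$, a term bounded by $n(\Delta t)^2\|v_1\|_\infty^2$, and the convection sum $2\Delta t\sum_{i=0}^{n-1}\mathbb{E}[x_1^i v_1(t_{i+1/2},x_2^i)]$. The first three are manifestly $O(n)$; the whole difficulty is the last one, which a priori could be as large as $O(n^2)$. To tame it I would invoke the pointwise identity \eqref{DiscreteCellProblem1} from Lemma \ref{lem:existenceofcellpro}, namely $\Delta t\,v_1(t_{i+1/2},x_2^i)=\hat v_{1,N}(t_i,\textbf{X}^i)-\mathbb{E}[\hat v_{1,N}(t_{i+1},\textbf{X}^{i+1})\mid\textbf{X}^i]$. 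Since $x_1^i$ is $\mathcal{F}_i$-measurable, the tower property removes the inner conditional expectation, and an Abel summation rewrites the convection sum as
\[
\mathbb{E}\big[x_1^0\hat v_{1,N}(t_0,\textbf{X}^0)\big]-\mathbb{E}\big[x_1^n\hat v_{1,N}(t_n,\textbf{X}^n)\big]+\sum_{i=1}^{n}\mathbb{E}\big[(x_1^i-x_1^{i-1})\hat v_{1,N}(t_i,\textbf{X}^i)\big].
\]
Substituting $x_1^i-x_1^{i-1}=v_1(t_{i-1/2},x_2^{i-1})\Delta t+\sigma N_1^{i-1}$ and applying Young's and Cauchy--Schwarz inequalities together with $\|\hat v_{1,N}\|_\infty<\infty$ and $\|v_1\|_\infty<\infty$, each summand of the last sum is bounded uniformly in $i$, so that sum and the first boundary term are $O(n)$.

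The only term not immediately under control is the boundary contribution $\mathbb{E}[x_1^n\hat v_{1,N}(t_n,\textbf{X}^n)]$, which still carries the unbounded factor $x_1^n$. The key is to bound it \emph{only} by Cauchy--Schwarz, $|\mathbb{E}[x_1^n\hat v_{1,N}(t_n,\textbf{X}^n)]|\le\|\hat v_{1,N}\|_\infty\sqrt{a_n}$, rather than crudely. Collecting everything yields an inequality of the shape $a_n\le C_1+C_2 n+2\|\hat v_{1,N}\|_\infty\sqrt{a_n}$ with $C_1,C_2$ independent of $n$; reading it as a quadratic inequality in $\sqrt{a_n}$ gives $a_n\le C_3+C_4 n$, hence $\sup_n a_n/n<\infty$, and the same for $\mathbb{E}(x_2^n)^2$, which proves \eqref{conj}. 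I expect this self-referential boundary term to be the main obstacle: one must resist estimating $x_1^n$ directly and instead feed $\sqrt{a_n}$ back into the inequality. The supporting facts --- the boundedness and regularity of $\hat v_{1,N}$, and the uniform-in-$\tau$ exponential decay that makes $\hat v_{1,N}$ well defined --- come from Lemma \ref{lem:existenceofcellpro} and Theorem \ref{thm:main-per} respectively, so nothing beyond routine bookkeeping is needed elsewhere.
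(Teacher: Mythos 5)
Your proposal is correct and follows essentially the same route as the paper: square and telescope the recursion, use the discrete corrector identity \eqref{DiscreteCellProblem1} with Abel summation to convert the convection cross term into bounded increments plus boundary terms. Your explicit closure of the self-referential boundary term $\mathbb{E}[x_1^n\hat v_{1,N}(t_n,\textbf{X}^n)]$ via the quadratic inequality in $\sqrt{a_n}$ is in fact more careful than the paper, which dismisses that term as ``easily checked''; the only other (minor) discrepancy is that for the second component the paper works with $\tilde v_2(t,x_1,x_2)=v_2(t,x_1+v_1(t,x_2)\Delta t)$ rather than $v_2$ itself, since that is the drift actually appearing in the increment of $x_2^n$.
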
		
In practice, we shall first choose a time step $\Delta t$ and run our numerical scheme \eqref{scheme} 
to compute the effective diffusivity until the result converges to a constant, which may depend on $\Delta t$.  As such, we shall prove that the limit of the constant converges to the exact effective diffusivity of the original passive tracer model as $\Delta t$ approaches zero. Namely, we shall prove that our numerical scheme is robust in computing effective diffusivity. More details on the numerical results will be given in Section \ref{sec:numerical_result}. 
\begin{theorem}\label{thm:convergence}
	Let $x^n_1$, $n=0,1,....$ be the first component of the numerical solution obtained by using the scheme \eqref{scheme} and $\Delta t$ denote the time step. We have the convergence estimate of the effective diffusivity as 
	\begin{align}\label{est:order}
	\lim_{n\to\infty}\frac{\mathbb{E}(x^n_1)^2}{n\Delta t}= \sigma^2+2\int_{\mathbb{T}^2} \chi_1v_1+\mathcal{O}(\Delta t),
	\end{align}
	where the constant in $\mathcal{O}(\Delta t)$  may depend on the regularity of $v_1$, $v_2$ and the constant $\sigma$, but does not depend on the computational time $T$. 
	\begin{proof}
		We will prove the statement by direct computation. We divide both sides of the Eq.\eqref{eqn:Ex1x1overn} by $\Delta t$ and obtain 
		\begin{align}\label{eqn:mainterm}
		\frac{1}{n\Delta t}\mathbb{E}\big[(x^n_1)^2|(x_1^{0},x_2^{0})\big]=&\frac{1}{n\Delta t}(x_1^0)^2+\sigma^2+ \frac{\Delta t}{n}\sum_{i=0}^{n-1}\mathbb{E}v_1^2(t_{i+\frac{1}{2}},x_2^i)\nonumber\\
		&+\frac{2}{n\Delta t}\sum_{i=1}^{n}\mathbb{E}\big[\hat{v}_1(t_i,\textbf{X}^i)\big(v_1(t_{i-\frac{1}{2}},x_2^{i-1})\Delta t+\sigma N^{i-1}_1\big)\big]\nonumber\\
		&+\frac{2}{n\Delta t}\big(\hat{v}_1(t_0,\textbf{X}^0)x_1^0-\mathbb{E}[\hat{v}_1(t_n,\textbf{X}^n)x^n_1]\big).
		\end{align}
		First, we notice that for a fixed $\Delta t$, the terms $\frac{1}{n\Delta t}(x_1^0)^2$ and 
		$\frac{2}{n\Delta t}\hat{v}_1(t_0,\textbf{X}^0)x_1^0$ converge to zero as $n\to\infty$, where we have used the fact that
		$\hat{v}_1(t_0,\textbf{X}^0)$ is bounded. Also notice that the term $\frac{\Delta t}{n}\sum_{i=0}^{n-1}\mathbb{E}v_1^2(t_{i+\frac{1}{2}},x_2^i)$ is $\mathcal{O}(\Delta t)$, due to the term $(v_1)^2$ is bounded. Then, for a fixed $\Delta t$, we have 
		\begin{equation}
		\lim_{n\to\infty}\frac{2}{n\Delta t}\big|\mathbb{E}[\hat{v}_1(\textbf{X}^n)x^n_1]\big|\leq \lim_{n\to\infty}\frac{2}{\sqrt{n}\Delta t}||\hat{v}_1||_{\infty}\mathbb{E}|\frac{x^n_1}{\sqrt{n}}|\leq \lim_{n\to\infty}\frac{1}{\sqrt{n}\Delta t}||\hat{v}_1||_{\infty}\mathbb{E}\big[\frac{(x^n_1)^2}{n}+1\big]=0,
		\end{equation}
		where the term $\mathbb{E}[\frac{(x^n_1)^2}{n}]$ is bounded due to the Theorem \ref{thm:boundness} and 
		$||\hat{v}_1||_\infty\to||\chi_1||_\infty<\infty $ due to the Theorem \ref{thm:disc-to-cont}.
		
		Therefore, we only need to focus on the estimate of terms in the second line of Eq.\eqref{eqn:mainterm}, which corresponds to the convection-enhanced diffusion effect. 
		Notice that $\hat{v}_1\in\mathbb{C}^{2,6,\alpha}$, we compute the Ito-Taylor series approximation of $\hat{v}_1(t_{i},\textbf{X}^i)$,
		\begin{align}\label{eqn:taylor}
		\hat{v}_1(t_{i},\textbf{X}^i)&=\hat{v}_1(t_{i-1},\textbf{X}^{i-1})+\hat{v}_{1,x_1}(t_{i-1},\textbf{X}^{i-1})\big(v_1(t_{i-\frac{1}{2}},x_2^{i-1})\Delta t+\sigma N^{i-1}_1\big)\nonumber\\
		&+\hat{v}_{1,x_2}(t_{i-1},\textbf{X}^{i-1})\big(v_2\big(t_{i-\frac{1}{2}},x_1^{i-1}\big)\Delta t+\sigma N^{i-1}_2\big)\nonumber\\
		&+\frac{1}{2}\big(\hat{v}_{1,x_1x_1}(t_{i-1},\textbf{X}^{i-1})+\hat{v}_{1,x_2x_2}(t_{i-1},\textbf{X}^{i-1})\big)\sigma^2\Delta t+\mathcal{O}\big((\Delta t)^2\big),
		\end{align}		
		where we have used the fact that $v_2\big(t_{i-\frac{1}{2}},x_1^{i-1}+v_1(t_{i-\frac{1}{2}},x_2^{i-1})\Delta t\big)=v_2\big(t_{i-\frac{1}{2}},x_1^{i-1}\big)+\mathcal{O}(\Delta t)$, when $\Delta t$ is small and $v_2$ is smooth. 
		 Since $\hat{v}_1\to\chi_1$ in $\mathbb{C}_0^{2,6,\alpha}$, the truncated term $\mathcal{O}\big((\Delta t)^2\big)$ in Eq.\eqref{eqn:taylor} is uniformly bounded when $\Delta t$ is small enough. Substituting the Taylor expansion of $\hat{v}_1(t_{i},\textbf{X}^i)$ in Eq.\eqref{eqn:taylor} into the target term of our estimate (i.e., terms in the second line of Eq.\eqref{eqn:mainterm}), we get 
		\begin{align}
		\mathbb{E}\big[\hat{v}_1(t_{i},\textbf{X}^i)&(v_1(t_{i-\frac{1}{2}},x_2^{i-1})\Delta t+\sigma N^{i-1}_1)\big]=\mathbb{E}\Big[\Big(v_1(t_{i-\frac{1}{2}},x_2^{i-1})\Delta t+\sigma N^{i-1}_1\Big)\cdot\nonumber\\
		&\Big(\hat{v}_1(t_{i-1},\textbf{X}^{i-1})+\hat{v}_{1,x_1}(t_{i-1},\textbf{X}^{i-1})
		\big(v_1(t_{i-\frac{1}{2}},x_2^{i-1})\Delta t+\sigma N^{i-1}_1\big)\nonumber\\
		&+\hat{v}_{1,x_2}(t_{i-1},\textbf{X}^{i-1})\big(v_2(t_{i-\frac{1}{2}},x_1^{i-1})\Delta t+\sigma N^{i-1}_2\big)\nonumber\\
		&+\frac{1}{2}\big(\hat{v}_{1,x_1x_1}(t_{i-1},\textbf{X}^{i-1})+\hat{v}_{1,x_2x_2}(t_{i-1},\textbf{X}^{i-1})\big)\sigma^2\Delta t+\mathcal{O}((\Delta t)^2)\Big)\Big].
		\end{align}
		Combining the terms with the same order of $\Delta t$, we obtain 
		\begin{align}
		&\mathbb{E}\big[\hat{v}_1(t_{i},\textbf{X}^i)\big(v_1(t_{i-\frac{1}{2}},x_2^{i-1})\Delta t+\sigma N^{i-1}_1\big)\big] \nonumber \\
		=&\Delta t\mathbb{E}\big[\hat{v}_1(t_{i-1},\textbf{X}^{i-1})v_1(t_{i-\frac{1}{2}},x_2^{i-1})+\sigma^2\hat{v}_{1,x_1}(t_{i-1},\textbf{X}^{i-1})\big]
		+\mathcal{O}((\Delta t)^2),
		\end{align}
where we have used the facts that: (1) $\textbf{X}^{i-1}$ is independent of $N^{i-1}_1$ and $N^{i-1}_2$ 
so the expectations of the corresponding terms vanish; (2) $N^{i-1}_1$ and $N^{i-1}_2$ are independent
so $\mathbb{E}(N^{i-1}_1N^{i-1}_2)=0$; and (3) $\mathbb{E}(N^{i-1}_1)^2=\Delta t$. 

Finally, by using the Theorem \ref{thm:main-per} and noticing the invariant measure is the uniform measure, we obtain from Eq.\eqref{eqn:mainterm} that
		\begin{align}
		\lim_{n\to\infty} \frac{1}{n\Delta t}\mathbb{E}\big[(x^n_1)^2|(x_1^{0},x_2^{0})\big]=\sigma^2+2\int (\hat{v_1}v_1+\sigma^2\hat{v}_{1,x_1})+\mathcal{O}(\Delta t).
		\end{align}
		Thus, our statement in the Eq.\eqref{est:order} is proved 
		using the facts that $\hat{v}_1$ converges to $\chi_1$ (see Theorem \ref{thm:disc-to-cont})
		and $\int \hat{v}_{1,x_1}=0$.
	\end{proof}
\end{theorem}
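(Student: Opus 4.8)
The plan is to start from the exact decomposition \eqref{eqn:Ex1x1overn} of $\frac{1}{n}\mathbb{E}\big[(x_1^n)^2\mid(x_1^0,x_2^0)\big]$ established in the proof of Theorem \ref{thm:boundness}, divide through by $\Delta t$ to obtain \eqref{eqn:mainterm}, and then analyze the limit $n\to\infty$ with $\Delta t$ held fixed, term by term. Three groups of terms are ``lower order in $n$'' and should be discarded first: the initial-data contributions $\frac{1}{n\Delta t}(x_1^0)^2$ and $\frac{2}{n\Delta t}\hat{v}_1(t_0,\textbf{X}^0)x_1^0$ vanish as $n\to\infty$ because $\hat{v}_1$ is bounded (Lemma \ref{lem:existenceofcellpro} gives smoothness, hence boundedness on the torus); the boundary term $\frac{2}{n\Delta t}\mathbb{E}\big[\hat{v}_1(t_n,\textbf{X}^n)x_1^n\big]$ vanishes by Cauchy--Schwarz, bounding it by $\frac{1}{\sqrt{n}\,\Delta t}\|\hat{v}_1\|_\infty\,\mathbb{E}\big[(x_1^n)^2/n + 1\big]$ and invoking the linear-growth bound of Theorem \ref{thm:boundness}. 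The term $\frac{\Delta t}{n}\sum_i\mathbb{E}v_1^2(t_{i+\frac12},x_2^i)$ is manifestly $\mathcal{O}(\Delta t)$ since $v_1$ is bounded. What survives is $\sigma^2$ plus the ``convection-enhanced'' sum $\frac{2}{n\Delta t}\sum_{i=1}^n\mathbb{E}\big[\hat{v}_1(t_i,\textbf{X}^i)\big(v_1(t_{i-\frac12},x_2^{i-1})\Delta t+\sigma N_1^{i-1}\big)\big]$.

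The heart of the argument is to evaluate this sum. I would expand $\hat{v}_1(t_i,\textbf{X}^i)$ by an It\^{o}--Taylor expansion about $(t_{i-1},\textbf{X}^{i-1})$ up to order $(\Delta t)^{3/2}$, exactly as in \eqref{eqn:taylor}, using the scheme \eqref{scheme_analysis} for the increments $x_1^i-x_1^{i-1}$, $x_2^i-x_2^{i-1}$ and the regularity $\hat{v}_1\in\mathbb{C}_0^{2,6,\alpha}$. Multiplying by $\big(v_1(t_{i-\frac12},x_2^{i-1})\Delta t+\sigma N_1^{i-1}\big)$ and taking expectations, the terms linear in $N_1^{i-1}$ or $N_2^{i-1}$ drop because $\textbf{X}^{i-1}$ and the coefficient functions evaluated at step $i-1$ are $\mathcal{F}_{i-1}$-measurable, hence independent of the step-$i$ noise; $\mathbb{E}(N_1^{i-1}N_2^{i-1})=0$; and $\mathbb{E}(N_1^{i-1})^2=\Delta t$. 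This collapses the $i$-th expectation to $\Delta t\,\mathbb{E}\big[\hat{v}_1(t_{i-1},\textbf{X}^{i-1})v_1(t_{i-\frac12},x_2^{i-1})+\sigma^2\hat{v}_{1,x_1}(t_{i-1},\textbf{X}^{i-1})\big]+\mathcal{O}((\Delta t)^2)$, with the remainder uniform in $i$ and $n$ because $\|\hat{v}_1\|_{\mathbb{C}^{2,6,\alpha}}$ is bounded uniformly in $\Delta t$ by Theorem \ref{thm:disc-to-cont}. Summing and dividing by $n\Delta t$, the remainder contributes $\mathcal{O}(\Delta t)$.

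It then remains to pass to the limit in the Ces\`{a}ro average $\frac1n\sum_{i=1}^n\mathbb{E}\big[g(t_{i-1},\textbf{X}^{i-1})\big]$ with $g=\hat{v}_1 v_1+\sigma^2\hat{v}_{1,x_1}$, a bounded smooth periodic function on $\mathbb{T}\times\tilde{Y}$. Here I would invoke the ergodicity of Theorem \ref{thm:main-per}: the 2D scheme is volume-preserving, so its invariant measure $\pi_\tau$ is the uniform measure for each $\tau$, and the exponential mixing in part (a) gives $\mathbb{E}\big[g(t_{i-1},\textbf{X}^{i-1})\big]\to\int_{\mathbb{T}\times\tilde{Y}}g$ along the times within a period, whence the Ces\`{a}ro average converges to $\int g$. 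This yields $\lim_{n\to\infty}\frac1{n\Delta t}\mathbb{E}\big[(x_1^n)^2\mid\textbf{X}^0\big]=\sigma^2+2\int(\hat{v}_1 v_1+\sigma^2\hat{v}_{1,x_1})+\mathcal{O}(\Delta t)$ uniformly in $\textbf{X}^0$, so the unconditional limit \eqref{est:order} follows after integrating over the initial distribution. Finally, Theorem \ref{thm:disc-to-cont} gives $\hat{v}_1\to\chi_1$ in $\mathbb{C}_0^{2,6,\alpha}$ at rate $\mathcal{O}(\Delta t)$, so $\int\hat{v}_1 v_1=\int\chi_1 v_1+\mathcal{O}(\Delta t)$, while $\int\hat{v}_{1,x_1}=0$ by periodicity; substituting gives \eqref{est:order}. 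The step I expect to be most delicate is the uniform-in-$n$ control of the It\^{o}--Taylor remainder together with the boundary term $\mathbb{E}[\hat{v}_1(t_n,\textbf{X}^n)x_1^n]/(n\Delta t)$: both genuinely require the linear-growth estimate of Theorem \ref{thm:boundness} and the $\Delta t$-uniform regularity of $\hat{v}_1$ from Theorem \ref{thm:disc-to-cont}, so the estimates must be tracked carefully to ensure the final $\mathcal{O}(\Delta t)$ constant is independent of the computational time $T$.
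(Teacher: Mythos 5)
Your proposal is correct and follows essentially the same route as the paper's own proof: the same decomposition \eqref{eqn:mainterm}, the same disposal of the initial-data and boundary terms via Theorem \ref{thm:boundness}, the same It\^{o}--Taylor expansion \eqref{eqn:taylor} collapsing the convection term to $\Delta t\,\mathbb{E}[\hat{v}_1 v_1+\sigma^2\hat{v}_{1,x_1}]+\mathcal{O}((\Delta t)^2)$, and the same appeal to Theorem \ref{thm:main-per}, Theorem \ref{thm:disc-to-cont}, and $\int\hat{v}_{1,x_1}=0$ to conclude. The only difference is that you spell out the Ces\`{a}ro-average limit and the uniformity of the remainder slightly more explicitly than the paper does, which is a refinement rather than a departure.
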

\begin{remark}
If we divide two on both sides of the Eq.\eqref{est:order}, we can find that our result 
recovers the definition of the effective diffusivity $D^{E}_{11}$ defined in the Eq.\eqref{Def_EffectiveDiffusivity_Euler}.	Recall that $D_0=\sigma^2/2$. 
Theorem \ref{thm:convergence} reveals the connection of the definition of the effective diffusivity using the Eulerian framework and Lagrangian framework; see Eq.\eqref{Def_EffectiveDiffusivity_Euler} and Eq.\eqref{Def_EffectiveDiffusivity_Lagrangian}, which is fundamental in this context. For 3D time-dependent flow problems, the Eulerian framework has good theoretical values but the Lagrangian framework is computationally accessible.
\end{remark}
\begin{remark}
For the second component of the numerical solution, i.e., $x^n_2$, $n=0,1,...$, we can obtain the similar convergence result in computing the effective diffusivity.  First we consider $\tilde{v}_2(t,x_1,x_2):=v_2(t,x_1+v_1(t,x_2)\Delta t)$ and notice that $\tilde{v}_2-v_2=\mathcal{O}(\Delta t)$ and $\int_{\mathbb{T}}\tilde{v}_2\mathrm{d}x_1=0$. The remaining part of the proof is essentially the same as the results obtained in Sections \ref{sec:ProbabilisticProof}, \ref{sec:ConvergenceDiscreteProblem} and \ref{sec:EstimateEffectiveDiffusivity}, so we skip the details here. 
\end{remark}
	
\subsection{Generalizations to high-dimensional cases}\label{sec:HighDimensionalCases}  
\noindent
To show the essential idea of our probabilistic approach in proving the convergence rate of the numerical schemes, we have carried out our convergence analysis based on a two-dimensional model problem \eqref{eqn:particleSDE}. In fact, the extension of our approach to higher-dimensional problems is straightforward. Now we consider a high-dimensional problem as follow, 
\begin{equation}
\label{eqn:multiDSDE}
\mathrm{d}\textbf{X} =\textbf{v}(t,\textbf{X} )\mathrm{d}t+\Sigma \mathrm{d}\textbf{w}(t),
\end{equation}
where $\textbf{X}=(x_1,x_2,\cdots,x_d)^{T}\in \mathbb{R}^{d}$  is the position of a particle, 
$\textbf{v}=(v_1,v_2,\cdots,v_d)^{T}\in \mathbb{R}^{d}$ is the Eulerian velocity field at position $\textbf{X}$, $\Sigma$ is a $d\times d$ constant non-singular matrix, and $\textbf{w}(t)$ is a $d$-dimension Brownian motion vector. In particular, we assume the component $v_i$ does not depend on $x_i$, $i=1,...,d$.  Thus, the incompressible condition for $\textbf{v}(t,\textbf{X})$ (i.e. $\nabla_\textbf{X}\cdot \textbf{v}(t,\textbf{X})=0$) is easily guaranteed. 


For a deterministic and divergence-free dynamical system, Feng et. al. proposed a volume-preserving method \cite{KangShang1995volume}, which splits a $d$-dimensional problem into $d-1$ subproblems with each of them being a two-dimensional problem and thus being volume-preserving. We shall modify Feng's method (first-order case) by including the randomness as the last subproblem to take into account the additive noise, i.e., 
\begin{equation}\label{scheme:multiDimension}
	{\color{black}\begin{cases}
			x_1^{*}=x_1^{n-1}+\Delta t v_1(t_n+\frac{\Delta t}{2},x_2^{n-1},x_3^{n-1},x_4^{n-1},\cdots,x_{d-1}^{n-1}, x_d^{n-1}),\\
			x_2^{*}=x_2^{n-1}+\Delta t v_2(t_n+\frac{\Delta t}{2},x_1^{*},x_3^{n-1},x_4^{n-1},\cdots,x_{d-1}^{n-1}, x_d^{n-1}),\\
			x_3^{*}=x_3^{n-1}+\Delta t v_3(t_n+\frac{\Delta t}{2},x_1^{*},x_2^{*},x_4^{n-1},\cdots,x_{d-1}^{n-1}, x_d^{n-1}),\\
			\cdots,\\ 
			x_d^{*}=x_d^{n-1}+\Delta t v_d(t_n+\frac{\Delta t}{2},x_1^{*},x_2^{*},x_3^{*},x_4^{*},\cdots, x_{d-1}^{*}),\\
			\textbf{X}^{n}=\textbf{X}^{*}+\Sigma(\textbf{W}^{n}-\textbf{W}^{n-1}),
	\end{cases}}
\end{equation}
where $\textbf{X}^{*}=(x_1^{*},x_2^{*},\cdots,x_d^{*})^{T}$, $\textbf{W}^{n}-\textbf{W}^{n-1}$ is a $d$-dimensional independent random vector with each component of the form $\sqrt{\Delta t}\xi_{i}$, $\xi_{i}\sim\mathcal{N}(0,1)$, and $\textbf{X}^{n}=(x_1^{n},x_2^{n},\cdot\cdot\cdot,x_d^{n})^T$  is the numerical approximation to the exact solution $\textbf{X}(t_n)$ to the SDE \eqref{eqn:multiDSDE}  at time $t_n=n\Delta t$. 

The techniques of the convergence analysis for the two-dimensional problem can be applied to 
high-dimensional problems without much difficulty. For the high-dimensional problem \eqref{eqn:multiDSDE}, 
the smoothness and  strict positivity of the transition kernel in the discrete process can be
guaranteed if one assumes that the covariance matrix $\Sigma$ is non-singular and the scheme 
\eqref{scheme:multiDimension} is explicit. According to our assumption for the velocity field, the scheme \eqref{scheme:multiDimension} is volume-preserving for each step. Thus, the solution to the first-order modified equation is divergence-free and the invariant measure on the torus (defined by $\mathbb{R}^d/\mathbb{Z}^d$, when the period is $1$) remains uniform for all $t$. Finally, the convergence of the cell problem can be studied by using the BCH formula \eqref{eqn:BCHformula} with $d+2$ differential operators. Recall that in the Eq.\eqref{eqn:operatorflow} we have four differential operators when we study the two-dimensional problem. 

Therefore, our numerical methods are robust in computing effective diffusivity for high-dimensional problems, which will be demonstrated through time-dependent chaotic flow problems in three-dimensional space in Section \ref{sec:numerical_result}.

\section{Numerical results}\label{sec:numerical_result}
\noindent 
In this section, we will present numerical examples to verify the convergence analysis 
of the proposed method in computing effective diffusivity for time-dependent chaotic flows. In addition, 
we will investigate the convection-enhanced diffusion phenomenon in 3D time-dependent flow, i.e., the time-dependent ABC flow and the time-dependent Kolmogorov flow. Without loss of generality, we compute the quantity $\frac{\mathbb{E}(x_1(T))^2}{2T}$, which is used to approximate $D^{E}_{11}$ in the effective diffusivity matrix \eqref{Def_EffectiveDiffusivity_Euler}. 

\subsection{Verification of the convergence rate}\label{sec:ConvergenceRate}
\noindent
We first consider a two-dimensional passive tracer model. 
Let $(x_1,x_2)^{T}\in \mathbb{R}^{2}$ denote the position of a particle. Its motion is described by the following SDE,  
\begin{equation}\label{eqn:TwoD-TDFlow}
\begin{cases}
\mathrm{d}x_1=\sin\big(4 x_2+1+\sin(2\pi t)\big)\exp\big(\cos\big(4 x_2 +1+\sin(2\pi t)\big)\big)\mathrm{d}t+\sigma \mathrm{d}w_{1,t},\\
\mathrm{d}x_2= \cos\big(2 x_1+\sin(2\pi t)\big)\exp\big(\sin\big(2 x_1 +\sin(2\pi t)\big)\big)\mathrm{d}t+\sigma \mathrm{d}w_{2,t},
\end{cases}
\end{equation}
where $\sigma=\sqrt{2\times 0.1}$, $w_{i,t}$, $i=1,2$ are independent Brownian motions, and the initial data $(x_1^0,x_2^0)^{T}$ follows uniform distributions in $[-0.5,0.5]^2$. One can easily verify the velocity field in \eqref{eqn:TwoD-TDFlow} is time-dependent and divergence-free.

In our numerical experiments, we use Monte Carlo samples to discretize the Brownian motions $w_{1,t}$ and $w_{2,t}$. The sample number is denoted by $N_{mc}$. We choose $\Delta t_{ref}=\frac{1}{2^{12}}$ and $N_{mc}=3,200,000$ to solve the SDE \eqref{eqn:TwoD-TDFlow} to compute the reference solution, i.e., the ``exact'' effective diffusivity, where the final computational time is $T=3000$ so that the calculated effective diffusivity converges to a constant. In fact, we find that the passive tracer model will enter a mixing stage if the computational time is bigger than $T=1000$.  It takes about $17$ hours to compute the reference solution on a 80-core server (HPC2015 System at HKU). The reference solution for the effective diffusivity is $D^{E}_{11}=0.219$.

In Fig.\ref{fig:convergence-2D-flow}, we plot the convergence results of the effective diffusivity using our method (i.e., $\frac{\mathbb{E}(x_1(T))^2}{2T}$) with respective to different time-step $\Delta t$ at  $T=3000$. In addition, we show a fitted straight line with the slope $1.04$, i.e., the convergence rate is about $(\Delta t)^{1.04}$. This numerical result verifies the convergence analysis in Theorem \ref{thm:convergence}. 


\begin{figure}[tbph]
	\centering
	\subfigure[]{
		\includegraphics[width=0.45\linewidth]{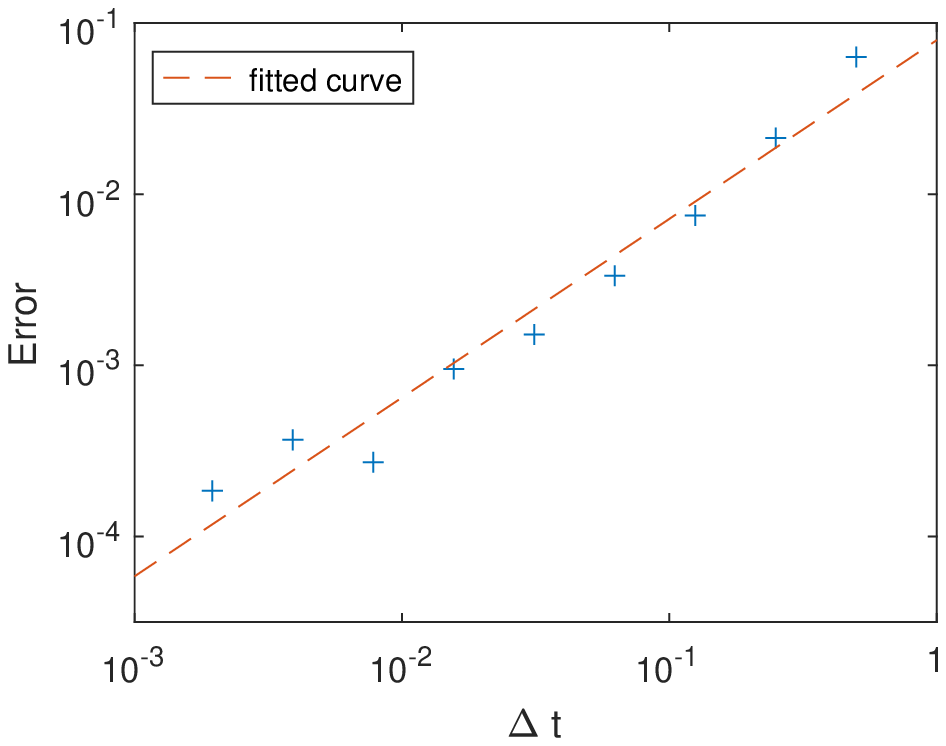}
		\label{fig:convergence-2D-flow}
	}%
	\subfigure[]{
		\includegraphics[width=0.45\linewidth]{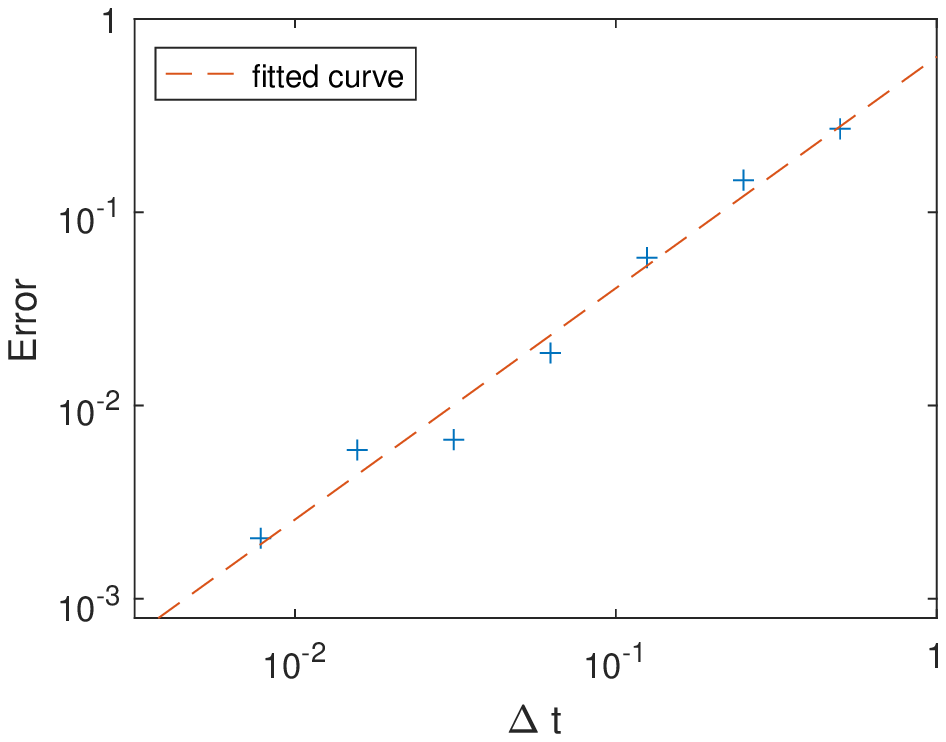}
		\label{fig:convergenced1e-4eps1e-1}
	} 
	\caption{Error of $D^{E}_{11}$ for two time-dependent flows with different time-steps. 
	(a) 2D time-dependent chaotic flow, fitted slope $\approx 1.04$; (b) 3D time-dependent Kolmogorov flow,  fitted slope $\approx 1.22$.}	
	\label{fig:convergence-2D-3D-flow}
\end{figure}


To further study the accuracy and robustness of our method for long-time integration, we consider a 3D time-dependent Kolmogorov flow problem. Let $(x_1,x_2,x_3)^{T}\in \mathbb{R}^{3}$ denote the position of a particle.  
The motion of a particle moving in the 3D time-dependent Kolmogorov flow is described by the following SDE, 
\begin{equation}\label{TD-Kflow-Eq}
\begin{cases}
\mathrm{d}x_1=\sin\big(x_3+\epsilon\sin(2\pi t)\big)\mathrm{d}t +\sigma \mathrm{d}w_{1,t},\\
\mathrm{d}x_2=\sin\big(x_1+\epsilon\sin(2\pi t)\big)\mathrm{d}t +\sigma \mathrm{d}w_{2,t},\\
\mathrm{d}x_3=\sin\big(x_2+\epsilon\sin(2\pi t)\big)\mathrm{d}t +\sigma \mathrm{d}w_{3,t}.
\end{cases} 
\end{equation} 
where $w_{1,t}$, $w_{2,t}$ and $w_{3,t}$ are independent Brownian motions. When $\epsilon=0$, the velocity field in \eqref{TD-Kflow-Eq} corresponds to the Kolmogorov flow \cite{KflowGalloway:1992}. The Kolmogorov flow possesses very chaotic behaviors \cite{childress1995stretch}, which brings challenges for our method.  

In our numerical experiment, we choose $\epsilon=10^{-1}$ and $\sigma=\sqrt{2\times 10^{-3}}$ in the Eq.\eqref{TD-Kflow-Eq}. We choose $\Delta t_{ref}=\frac{1}{2048}$ and $N_{mc}=480,000$ to 
compute the reference solution for the SDE \eqref{TD-Kflow-Eq}, i.e., the ``exact'' effective diffusivity.  In our numerical tests, we find that the passive tracer model will enter a mixing stage if the computational time is bigger than $T=2000$. To show the accuracy and robustness of our numerical scheme, we set $T=10^5$ here. It takes about 59 hours to compute the reference solution on the server and the reference solution for the effective diffusivity is $D^{E}_{11}=0.693$.  

In Fig.\ref{fig:convergenced1e-4eps1e-1}, we plot the convergence results of the effective diffusivity using our method with respect to different time-step $\Delta t$. In addition, we show a fitted straight line with the slope $1.22$, i.e., the convergence rate is about $(\Delta t)^{1.22}$. This numerical result again agrees with our error analysis. 


\subsection{Investigation of the convection-enhanced diffusion phenomenon}\label{sec:convection-enhanced}
\noindent 
As we have already demonstrated in Section \ref{sec:ConvergenceRate}, our method is very accurate and robust for long-time integration. Here, we will study the dependence of the effective diffusivity $D^E_{11}$ on different parameters in the time-dependent flows. First of all, we solve Eq.\eqref{TD-Kflow-Eq} and carry out the test for the 3D time-dependent Kolmogorov flow. 
 
In Fig.\ref{fig:tdkflowVaryD0VaryEps}, we show the time evolution of $\frac{\mathbb{E}(x_1(t))^2}{2t}$ for 
different $D_0$'s (here $D_0=\sigma^2/2$) and for four different $\epsilon$'s, where the result in Fig.\ref{fig:kflow_result} corresponding to the time-independent Kolmogorov flow (see Figure 6 of \cite{Zhongjian2018sharp}).  Notice that in Eq.\eqref{TD-Kflow-Eq} the parameter $\epsilon$ controls the strength of the time dependence. For each $D_0$ and $\epsilon$, we use $N_{mc}=240,000$ particles to solve the SDE \eqref{TD-Kflow-Eq}. We find that for each given $D_0$, the time evolution of $\frac{E(x_1(t))^2}{2t}$ converges as $\epsilon$ approaches zero. This can be rigorously justified through analysis; see \ref{sec:Limit-K-Flow}. In addition, we observe a certain amount of enhanced diffusion when $D_0$ decreases. However, the dependency of $D^E_{11}$ on $D_0$ is quite different from the pattern of the time-dependent ABC flow, which is known as the maximal enhancement and will be discussed later; see Fig.\ref{fig:tdabcflowresult}. 

To study the dependence of $D^E_{11}$ on $D_0$ and $\epsilon$, we choose different $\epsilon$'s and $D_0$'s and compute the corresponding effective diffusivity $D^E_{11}$. In this experiment, we use $\Delta t=2^{-7}$ and $N_{mc}=240,000$ particles to compute. The final computational time is $T=10^5$ so that the particles are fully mixed. We show the numerical results in Fig.\ref{fig:tdkflowVaryD0OneEps}.

We find that for each given $D_0$ as $\epsilon$ decreases the corresponding effective diffusivity $D^E_{11}$ converges to the effective diffusivity $D^E_{11}$ associated with $\epsilon=0$. This means the time dependency of $\epsilon$ improves the chaotic property of Kolmorogov flow though, it does not change the pattern of convection-enhanced diffusion in the Kolmogorov flow. When $\epsilon \leq 1$ the fitted slope within $D_0\in[10^{-5},10^{-3}]$ is $-0.2$, which indicates that $D^E_{11}\sim \mathcal{O}(1/D_0^{0.2})$. 
We call this behavior a sub-maximal enhancement, which may be explained by the fact that the Kolmogorov flow is more chaotic than the ABC flow \cite{KflowGalloway:1992}. The chaotic trajectories in Kolmogorov flow enhance diffusion much less than channel-like structures such as the ballistic orbits of ABC flows \cite{mcmillen2016ballistic,xin2016periodic}.   

\begin{figure}[tbph]
	\centering
	\subfigure[]{
		\includegraphics[width=0.45\linewidth]{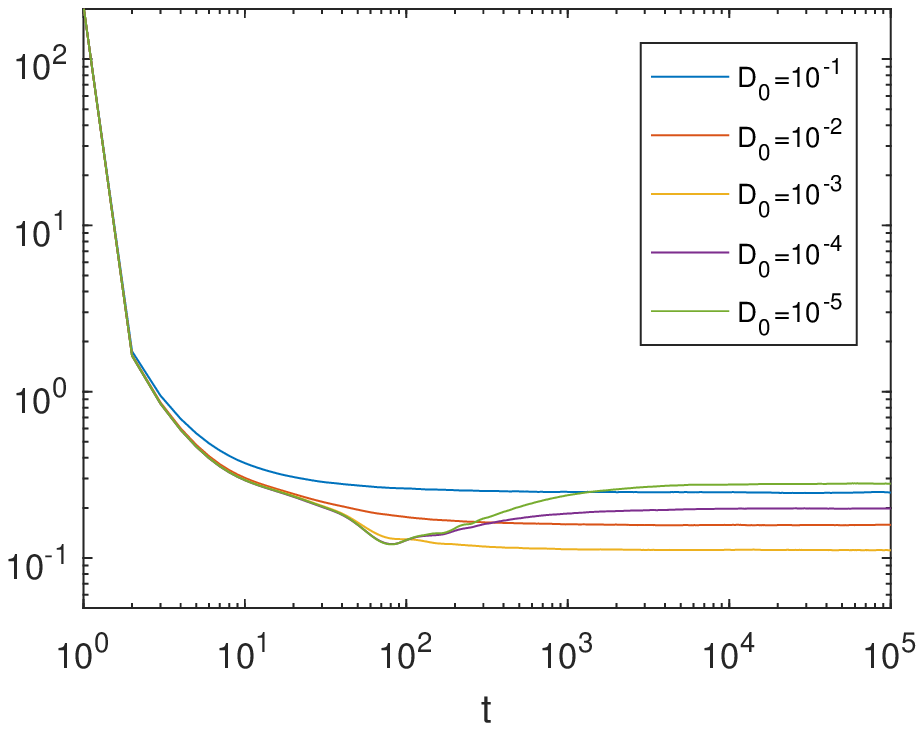}
		\label{fig:epslimit_eps1e1}
	}%
	\subfigure[]{
		\includegraphics[width=0.45\linewidth]{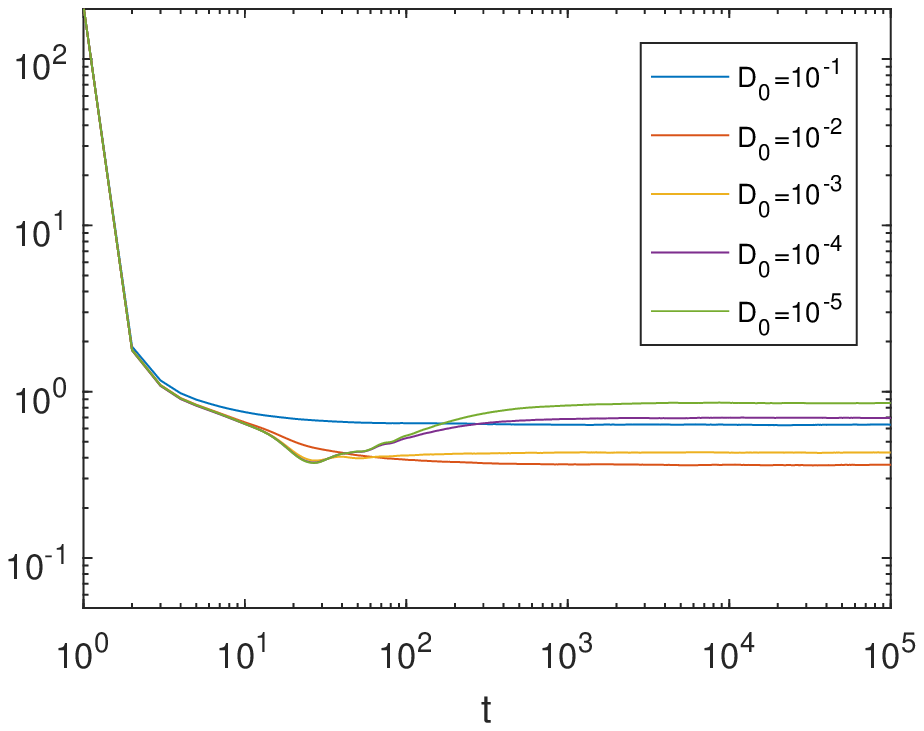}
		\label{fig:epslimit_eps1e0}
	}\\
	\subfigure[]{
		\includegraphics[width=0.45\linewidth]{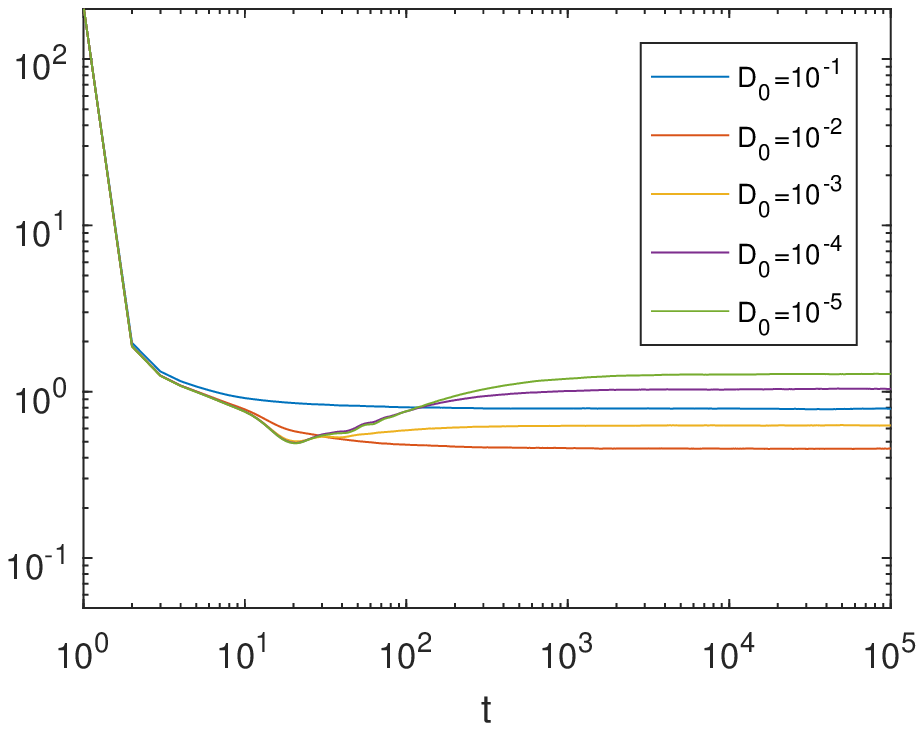}
		\label{fig:epslimit_eps1e-1}
	}%
	\subfigure[]{
		\includegraphics[width=0.45\linewidth]{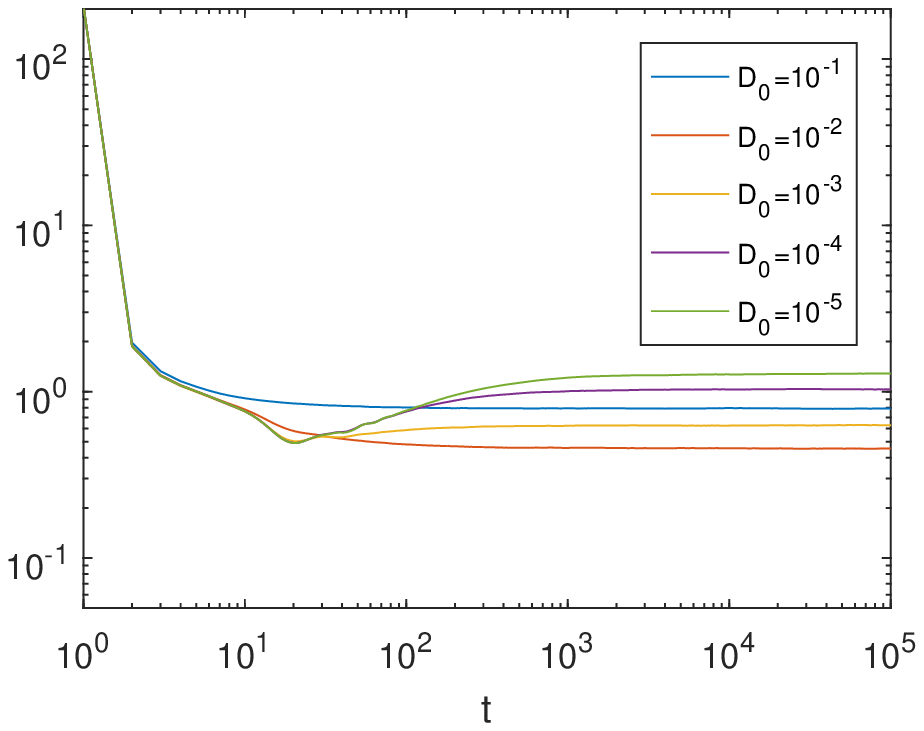}
		\label{fig:kflow_result}
	}
	\caption{Time evolution of the $\frac{\mathbb{E}(x_1(t))^2}{2t}$ for different $D_0$'s and $\epsilon$'s.
		(a) $\epsilon=10$, (b) $\epsilon=1$, (c) $\epsilon=0.1$, (d) $\epsilon=0$. }	
	\label{fig:tdkflowVaryD0VaryEps}
\end{figure}

\begin{figure}[tbph]
	\centering
	\includegraphics[width=0.5\linewidth]{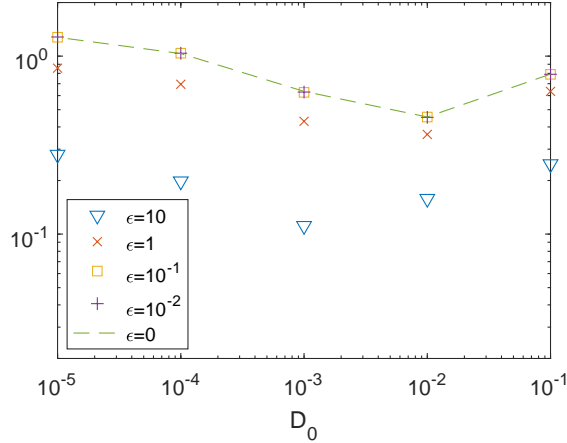}
	\caption{Convection-enhanced diffusion with a sub-maximal enhancement in the time-dependent Kolmogorov flow.}
	\label{fig:tdkflowVaryD0OneEps}
\end{figure}

Next, we use our stochastic structure-preserving scheme to solve time-dependent ABC flow problems. Let $(x_1,x_2,x_3)^{T}\in \mathbb{R}^{3}$ denote the position of a particle in the 3D Cartesian coordinate system. The motion of a particle moving in the 3D time-dependent ABC flow is described by the following SDE, 
\begin{equation}\label{TD-ABCflow-Eq}
\begin{cases}
\mathrm{d}x_1=A\sin\big(x_3+\epsilon\sin(2\pi t)\big)\mathrm{d}t+C\cos\big(x_2+\epsilon\sin(2\pi t)\big)\mathrm{d}t +\sigma \mathrm{d}w_{1,t},\\
\mathrm{d}x_2=B\sin\big(x_1+\epsilon\sin(2\pi t)\big)\mathrm{d}t+A\cos\big(x_3+\epsilon\sin(2\pi t)\big)\mathrm{d}t +\sigma \mathrm{d}w_{2,t},\\
\mathrm{d}x_3=C\sin\big(x_2+\epsilon\sin(2\pi t)\big)\mathrm{d}t+B\cos\big(x_1+\epsilon\sin(2\pi t)\big)\mathrm{d}t +\sigma \mathrm{d}w_{3,t},
\end{cases}
\end{equation}
where $w_{1,t}$, $w_{2,t}$ and $w_{3,t}$ are independent Brownian motions. For $\epsilon=0$ and $\sigma=0$, the velocity field in \eqref{TD-ABCflow-Eq} corresponds to the standard ABC flow \cite{dombre1986chaotic}. The ABC flow is a three-dimensional incompressible velocity field which is an exact solution to the Euler's equation. It is notable as a simple example of a fluid flow that can have chaotic trajectories. 
In our numerical experiments, we set $A=B=C=1$.

In Fig.\ref{fig:tdABCflowVaryD0VaryEps}, we show the time evolution of the $\frac{\mathbb{E}(x_1(t))^2}{2t}$ for 
different $D_0$'s (here $D_0=\sigma^2/2$) and for four different $\epsilon$'s, where 
the result in Fig.\ref{fig:abc_result} corresponding to the time-independent ABC flow (see Figure 3 of \cite{Zhongjian2018sharp}). Again the parameter $\epsilon$ controls the strength of the time dependence.  For each $D_0$ and $\epsilon$, we use $N_{mc}=240,000$ particles to solve the SDE \eqref{TD-ABCflow-Eq}. We find that for each given $D_0$, the time evolution of the $\frac{\mathbb{E}(x_1(t))^2}{2t}$ converges when $\epsilon$ converges to zero.  However, we observe two different patterns compared with the results shown in Fig.\ref{fig:tdkflowVaryD0VaryEps}. First, when 
we decrease $D_0$, it takes a longer time for the system to enter a mixing stage. Second,  we observe a large amount of enhanced diffusion when $D_0$ decreases.

To further investigate the dependence of $D^E_{11}$ on $D_0$ and $\epsilon$, we choose different $\epsilon$'s and $D_0$'s and compute the corresponding effective diffusivity $D^E_{11}$. In this experiment, we use $\Delta t=2^{-7}$ and $N_{mc}=240,000$ particles to compute. The final computational time is $T=10^5$ so that the particles are fully mixed. 

In Fig.\ref{fig:tdabcflowresult}, we show the numerical results. We find that for each given $D_0$, as $\epsilon$ decreases the corresponding effective diffusivity $D^E_{11}$ converges to the effective diffusivity $D^E_{11}$ associated with $\epsilon=0$. Thus, the time-dependent ABC flow has a similar convection-enhanced diffusion behavior as the time-independent ABC flow. The fitted slope within $D_0\in[10^{-5},10^{-1}]$ is about $-1.0$, which indicates that $D^E_{11}\sim \mathcal{O}(1/D_0^{1})$. 
This result indicates that the $D^E_{11}$ of the time-dependent ABC flow achieves the upper-bound of Eq.\eqref{eqn:maximaldiffusion}, i.e. the maximal enhancement. This maximal enhancement phenomenon may be attributed to the ballistic orbits of the ABC flow, where the time-independent case was discussed in \cite{mcmillen2016ballistic,xin2016periodic}. 

Moreover, our result for $D_0\in[10^{-3},10^{-1}]$ and $\epsilon=0$ recovers the same phenomenon as the Fig.2 in \cite{Biferale:95}, which was obtained by using the Eulerian framework, i.e., solving a cell problem. In Fig.\ref{fig:tdabcflowresult}, our method can be easily used to compute the effective diffusivity when $D_0\in[10^{-5},10^{-4}]$. It will be, however, extremely expensive for the Eulerian framework since one needs to solve a convection-dominated PDE \eqref{CellProblem_EffectiveDiffusivity} in 3D space, whose P\'{e}clet number is proportion to $\frac{1}{D_0}$.
\begin{figure}[tbph]
	\centering
	\subfigure[]{
		\includegraphics[width=0.45\linewidth]{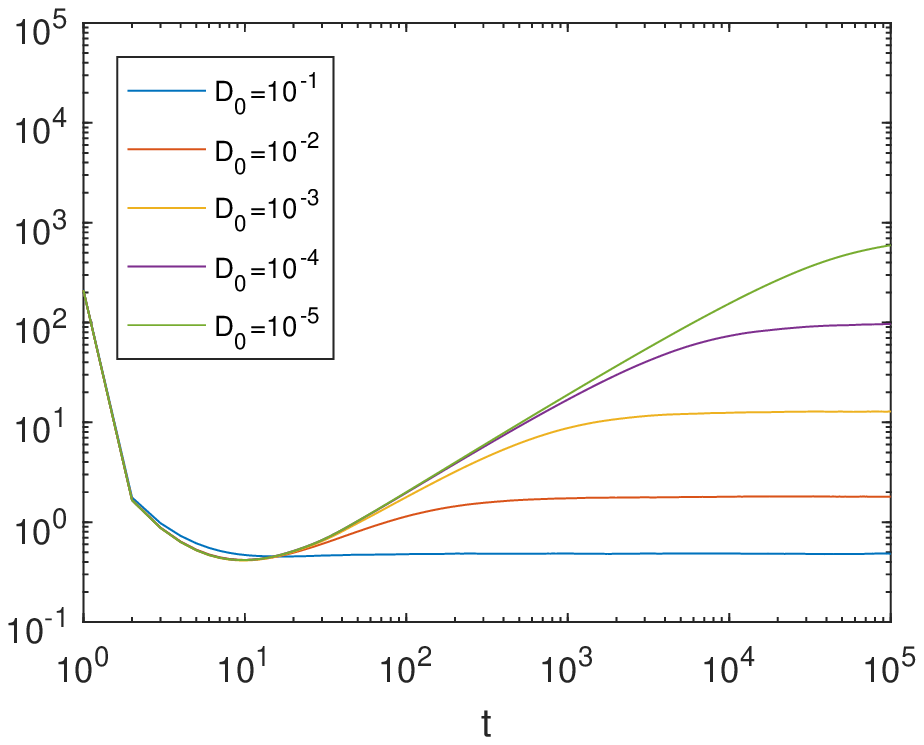}
		\label{fig:abc_epslimit_eps1e1}
	}%
	\subfigure[]{
		\includegraphics[width=0.45\linewidth]{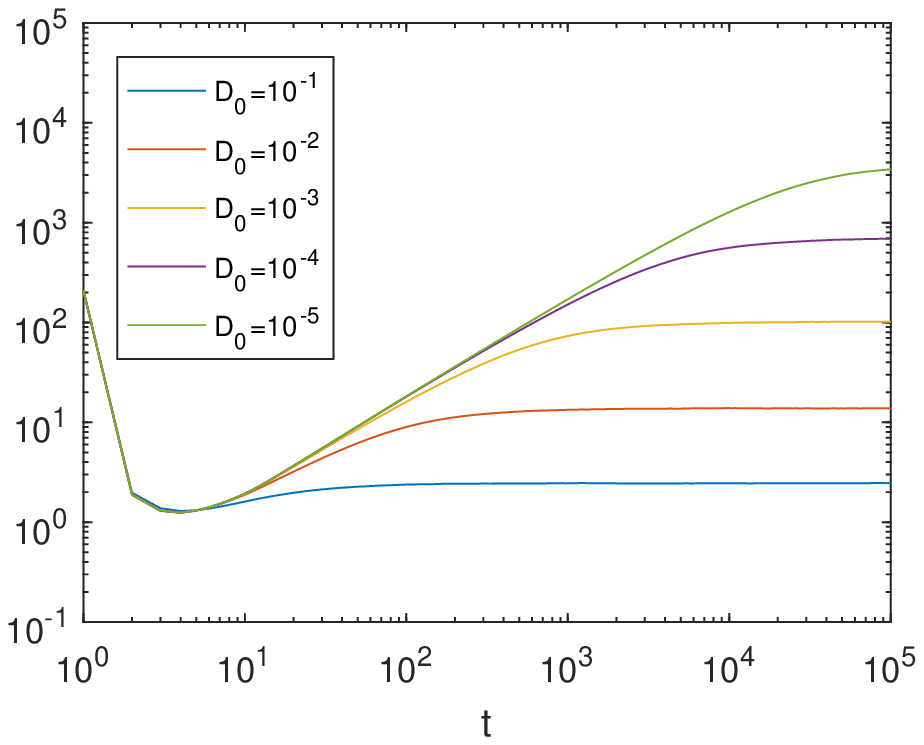}
		\label{fig:abc_epslimit_eps1e0}
	}\\
	\subfigure[]{
		\includegraphics[width=0.45\linewidth]{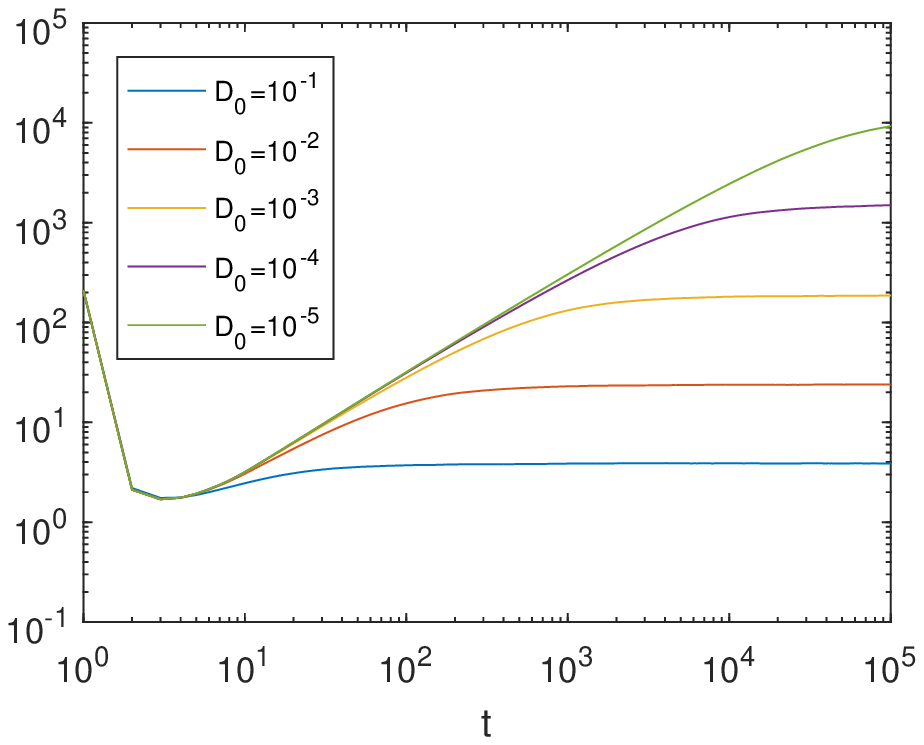}
		\label{fig:abc_epslimit_eps1e-1}
	}%
	\subfigure[]{
		\includegraphics[width=0.45\linewidth]{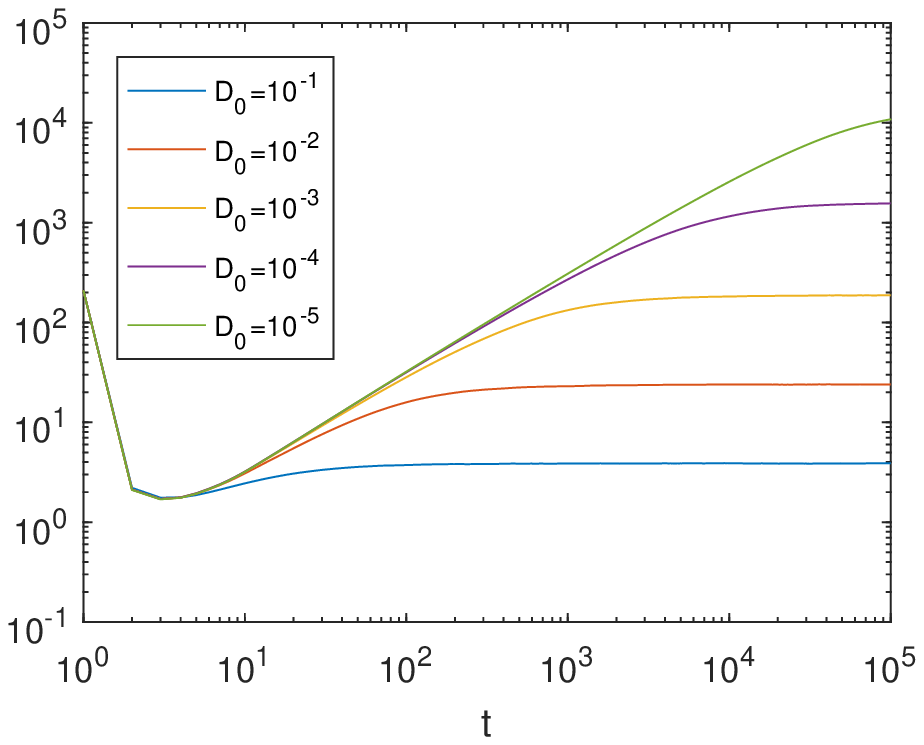}
		\label{fig:abc_result}
	}
	\caption{Time evolution of the $\frac{\mathbb{E}(x_1(t))^2}{2t}$ for different $D_0$ and $\epsilon$. (a) $\epsilon=10$, (b) $\epsilon=1$, (c) $\epsilon=0.1$, (d) $\epsilon=0$.}
	\label{fig:tdABCflowVaryD0VaryEps}
\end{figure}

\begin{figure}[tbph]
	\centering
	\includegraphics[width=0.5\linewidth]{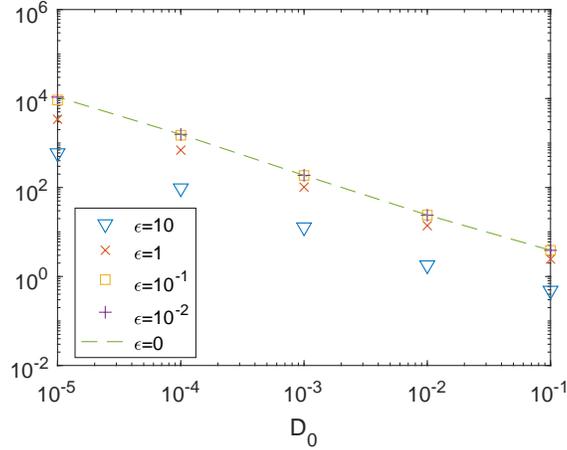}
	\caption{Convection-enhanced diffusion with a maximal enhancement in the time-dependent ABC flow.}
	\label{fig:tdabcflowresult}
\end{figure}

Finally, we investigate the dependence of $D^{E}_{11}$ on the frequency of the time-dependent ABC flow. 
Specifically, we solve the following SDE,
 \begin{equation}\label{TD-ABCflow-Eq-omega}
\begin{cases}
\mathrm{d}x_1=A\sin\big(x_3+ \sin(\Omega t)\big)\mathrm{d}t+C\cos\big(x_2+ \sin(\Omega t)\big)\mathrm{d}t +\sigma \mathrm{d}w_{1,t},\\
\mathrm{d}x_2=B\sin\big(x_1+ \sin(\Omega t)\big)\mathrm{d}t+A\cos\big(x_3+ \sin(\Omega t)\big)\mathrm{d}t +\sigma \mathrm{d}w_{2,t},\\
\mathrm{d}x_3=C\sin\big(x_2+ \sin(\Omega t)\big)\mathrm{d}t+B\cos\big(x_1+  \sin(\Omega t)\big)\mathrm{d}t +\sigma \mathrm{d}w_{3,t},
\end{cases},
\end{equation}
where $A=B=C=1$ and $\Omega$ is the frequency. Here we first choose $\Delta t=2^{-7}$, $N_{mc}=240,000$ and $T=10^5$. Then, we choose different $\Omega$ and compute the corresponding effective diffusivity $D^E_{11}$. 

In Fig.\ref{fig:tdABCflowVaryFreq}, we show the numerical results. We find that when $\Omega$ is near $0.1$ the diffusion enhancement is weak. When $\Omega$ is away from $0.1$, say $\Omega<0.05$ or $\Omega>0.2$, 
we observe the maximal enhancement phenomenon.  
A similar sensitive dependence on the frequency of time-dependent ABC flows was reported in \cite{brummell2001linear}, where the Lyapunov exponent of the deterministic time-dependent ABC flow problem (i.e., $\sigma=0$ in Eq. \eqref{TD-ABCflow-Eq}) was studied as the indicator of the extent of chaos; see Fig.2 and Fig.3 of \cite{brummell2001linear}.  

When $\Omega=0$, the flow of \eqref{TD-ABCflow-Eq-omega} is the same as that for $\epsilon=0$ case in \eqref{TD-ABCflow-Eq}, which will give the maximal enhancement phenomenon. When $\Omega$ is positive, the flow becomes time-dependent and the regions of chaos expand until the extent of chaos (i.e. the Lyapunov exponent) appears to reach a maximum, which is corresponding to $\Omega=0.1$. It seems that the diffusion enhancement is significantly weakened in this range of $\Omega$. When $\Omega$ continues to grow, the islands of the integrability regrow and the chaotic regions have shrunk significantly. We again observe the maximal enhancement phenomenon in this range of $\Omega$. Our numerical results suggest that the level of chaos and the strength of diffusion enhancement seem to compete with each other. More intensive theoretic and numerical studies will be reported in our future work.   

\begin{figure}[tbph]
	\centering
	\includegraphics[width=0.5\linewidth]{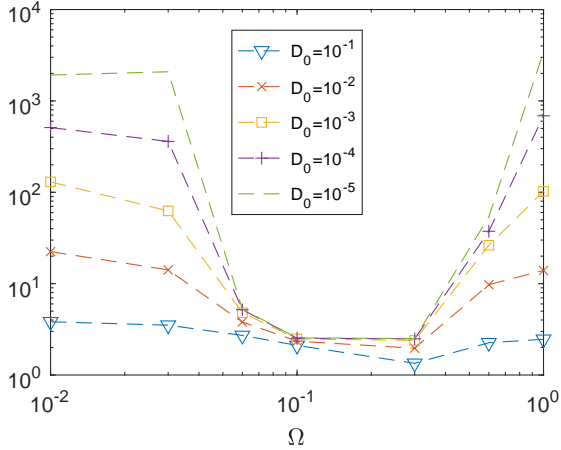}
	\caption{Dependence of $D^{E}_{11}$ on the frequency of the time-dependent ABC flow.}
	\label{fig:tdABCflowVaryFreq}
\end{figure}



\section{Conclusion}
\noindent    
In this paper, we developed a robust stochastic structure-preserving Lagrangian scheme in computing effective diffusivity of passive tracer models and provided a sharp convergence analysis on the proposed numerical scheme. Our convergence analysis is based on a probabilistic approach, which interprets the solution process generated by our numerical scheme as a Markov process. By exploring the ergodicity of the solution process, we gave a sharp and uniform-in-time error estimate for our numerical scheme, which allows us to compute the effective diffusivity over infinite time. Numerical results verify that the proposed method is robust and accurate in computing effective diffusivity of time-dependent chaotic flows. We observed the maximal enhancement phenomenon in time-dependent ABC flows and the sub-maximal enhancement phenomenon in time-dependent Kolmogorov flows, respectively. Moreover, we found that the time dependency in the velocity field improves the chaotic property of ABC flow and Kolmorogov flow though, it does not change the pattern of convection-enhanced diffusion in both flows.

There are two directions we plan to explore in our future work. First, we intend to study the convection-enhanced diffusion phenomenon and provide a sharp convergence analysis for general time-dependent chaotic flows, where the flows have a quasi-periodic property in the time domain. In addition, we shall investigate the convection-enhanced diffusion phenomenon for general spatial-temporal stochastic flows \cite{Yaulandim:1998,Majda:99} and develop convergence analysis for the corresponding numerical methods.

\section*{Acknowledgement}
\noindent
The research of Z. Wang is partially supported by the Hong Kong PhD Fellowship Scheme. The research of J. Xin is partially supported by NSF grants DMS-1924548 and DMS-1952644. The research of Z. Zhang is supported by Hong Kong RGC grants (Projects 17300817 and 17300318), Seed Funding Programme for Basic Research (HKU), and Basic Research Programme (JCYJ20180307151603959) of The Science, Technology and Innovation Commission of Shenzhen Municipality. The computations were performed using research computing facilities offered by Information Technology Services, the University of Hong Kong.

\appendix
\section{Limit of the parameter $\epsilon$ in a time-dependent chaotic flow} \label{sec:Limit-K-Flow} 
\noindent
We shall prove that when $\epsilon$ approaches zero, the effective diffusivity corresponding to the 
time-dependent chaotic flow, e.g. the flow in \eqref{TD-Kflow-Eq} will converge to the one corresponding to the time-independent one, e.g. $\epsilon=0$ in the flow of \eqref{TD-Kflow-Eq}. 
For notational simplicity, let $\textbf{v}=\textbf{v}^\epsilon$ denote the velocity field in \eqref{TD-Kflow-Eq} and $\textbf{v}=\textbf{v}^0$ denote the velocity field when $\epsilon=0$ in $\textbf{v}=\textbf{v}^\epsilon$. Moreover, we denote $\mathcal{L}^\epsilon(\cdot)=\textbf{v}^\epsilon\cdot \nabla_{x}(\cdot)   + D_{0}\Delta_{x}(\cdot) $. Now, the vector corrector field $\boldsymbol\chi^\epsilon$ associated with the velocity field $\textbf{v}^\epsilon$ satisfies the following cell problem,
\begin{equation}\label{eqn:xi-eps}
(\partial_{\tau}+\mathcal{L}^\epsilon)\boldsymbol\chi^\epsilon=-\textbf{v}^\epsilon.
\end{equation}
Let $\boldsymbol\chi^\epsilon_0$ denote the solution of the following equation
\begin{equation}\label{eqn:middle1}
(\partial_{\tau}+\mathcal{L}^\epsilon)\boldsymbol\chi_0^\epsilon=-\textbf{v}^0.
\end{equation}
We aim to prove $\boldsymbol\chi^\epsilon$ converges to $\boldsymbol\chi^\epsilon_0$ as $\epsilon$ approaches zero. At the same time we know the vector corrector field $\boldsymbol\chi_0$ associated with the velocity field $\textbf{v}^0$ satisfies the following cell problem, 
\begin{equation}\label{eqn:xi-0}
\mathcal{L}^0\boldsymbol\chi_0=-\textbf{v}^0,
\end{equation}
where $\mathcal{L}^0(\cdot)=\textbf{v}^0\cdot \nabla_{x} (\cdot) + D_{0}\Delta_{x} (\cdot)$. 
Now we consider, $\boldsymbol\chi_0^0(t,x)=\boldsymbol\chi_0(x)$, which solves,
\begin{equation}\label{eqn:middle2}
(\partial_{\tau}+\mathcal{L}^0)\boldsymbol\chi_0^0=-\textbf{v}^0,
\end{equation}
since $\partial_{\tau}\boldsymbol\chi_0^0=0$. Comparing Eqns.\eqref{eqn:middle1} and \eqref{eqn:middle2} and 
using Prop.\ref{prop:inverseoperator}, we know that $\boldsymbol\chi^\epsilon_0$ converges to $\boldsymbol\chi_0^0$ when $\epsilon$ approaches zero. Finally, comparing Eqns.\eqref{eqn:xi-eps} and \eqref{eqn:middle1}, we know 
$\boldsymbol\chi^\epsilon$ converges to $\boldsymbol\chi_0^\epsilon$ when $\epsilon$ approaches zero. 
Therefore, we prove $\boldsymbol\chi^\epsilon$ converges to $\boldsymbol\chi_0$ when $\epsilon$ approaches zero.

\section*{}
\bibliographystyle{siam}
\bibliography{ZWpaper}

\begin{thebibliography}{10}

\bibitem{JanHesthaven2017structure}
{\sc B.~Afkham and J.~Hesthaven}, {\em Structure preserving model reduction of
  parametric hamiltonian systems}, SIAM Journal on Scientific Computing, 39
  (2017), pp.~A2616--A2644.

\bibitem{BenOwhadi2003}
{\sc G.~Ben~Arous and H.~Owhadi}, {\em Multiscale homogenization with bounded
  ratios and anomalous slow diffusion}, Communications on Pure and Applied
  Mathematics, 56 (2003), pp.~80--113.

\bibitem{BensoussanLionsPapa:2011}
{\sc A.~Bensoussan, J.~L. Lions, and G.~Papanicolaou}, {\em Asymptotic analysis
  for periodic structures}, vol.~374, American Mathematical Soc., 2011.

\bibitem{Biferale:95}
{\sc L.~Biferale, A.~Crisanti, M.~Vergassola, and A.~Vulpiani}, {\em Eddy
  diffusivities in scalar transport}, Phys. Fluids, 7 (1995), pp.~2725--2734.

\bibitem{brummell2001linear}
{\sc N.~Brummell, F.~Cattaneo, and S.~Tobias}, {\em Linear and nonlinear dynamo
  properties of time-dependent {ABC} flows}, Fluid Dynamics Research, 28
  (2001), p.~237.

\bibitem{childress1995stretch}
{\sc S.~Childress and A.~D. Gilbert}, {\em Stretch, twist, fold: the fast
  dynamo}, vol.~37, Springer Science \& Business Media, 1995.

\bibitem{debussche2012weak}
{\sc A.~Debussche and E.~Faou}, {\em Weak backward error analysis for {SDE}s},
  SIAM Journal on Numerical Analysis, 50 (2012), pp.~1735--1752.

\bibitem{dombre1986chaotic}
{\sc T.~Dombre, U.~Frisch, J.~Greene, M.~H{\'e}non, A.~Mehr, and A.~Soward},
  {\em Chaotic streamlines in the {ABC} flows}, Journal of Fluid Mechanics, 167
  (1986), pp.~353--391.

\bibitem{Fannjiang:94}
{\sc A.~Fannjiang and G.~Papanicolaou}, {\em Convection-enhanced diffusion for
  periodic flows}, SIAM J Appl. Math., 54 (1994), pp.~333--408.

\bibitem{KangShang1995volume}
{\sc K.~Feng and Z.~Shang}, {\em Volume-preserving algorithms for source-free
  dynamical systems}, Numerische Mathematik, 71 (1995), pp.~451--463.

\bibitem{KflowGalloway:1992}
{\sc D.~Galloway and M.~Proctor}, {\em Numerical calculations of fast dynamos
  in smooth velocity fields with realistic diffusion}, Nature, 356 (1992),
  p.~691.

\bibitem{Garnier:97}
{\sc J.~Garnier}, {\em Homogenization in a periodic and time-dependent
  potential}, SIAM Journal on Applied Mathematics, 57(1) (1997), pp.~95--111.

\bibitem{BCHformula1974baker}
{\sc R.~Gilmore}, {\em Baker-{C}ampbell-{H}ausdorff formulas}, Journal of
  Mathematical Physics, 15 (1974), pp.~2090--2092.

\bibitem{ErnstLubich:06}
{\sc E.~Hairer, C.~Lubich, and G.~Wanner}, {\em Geometric numerical
  integration: structure-preserving algorithms for ordinary differential
  equations}, Springer Science and Business Media, 2006.

\bibitem{hong2006multi}
{\sc J.~Hong, H.~Liu, and G.~Sun}, {\em The multi-symplecticity of partitioned
  {R}unge-{K}utta methods for {H}amiltonian {PDE}s}, Mathematics of
  computation, 75 (2006), pp.~167--181.

\bibitem{Oleinik:94}
{\sc V.~V. Jikov, S.~Kozlov, and O.~A. Oleinik}, {\em Homogenization of
  {D}ifferential {O}perators and {I}ntegral {F}unctionals}, Springer, Berlin,
  1994.

\bibitem{kato2013perturbation}
{\sc T.~Kato}, {\em Perturbation theory for linear operators}, vol.~132,
  Springer Science \& Business Media, 2013.

\bibitem{krylov1996lectures}
{\sc N.~V. Krylov}, {\em Lectures on elliptic and parabolic equations in
  H{\"o}lder spaces}, Graduate studies in mathematics.

\bibitem{Yaulandim:1998}
{\sc C.~Landim, S.~Olla, and H.~T. Yau}, {\em Convection--diffusion equation
  with space--time ergodic random flow}, Probability theory and related fields,
  112 (1998), pp.~203--220.

\bibitem{lelievre_stoltz_2016}
{\sc T.~Leli\`{e}vre and G.~Stoltz}, {\em Partial differential equations and
  stochastic methods in molecular dynamics}, Acta Numerica, 25 (2016),
  pp.~681--880.

\bibitem{JackXinLyu:2017}
{\sc J.~Lyu, J.~Xin, and Y.~Yu}, {\em Computing residual diffusivity by
  adaptive basis learning via spectral method}, Numerical Mathematics: Theory,
  Methods and Applications, 10(2) (2017), pp.~351--372.

\bibitem{Majda:99}
{\sc A.~J. Majda and P.~R. Kramer}, {\em Simplified models for turbulent
  diffusion: theory, numerical modelling, and physical phenomena}, Phys. Rep.,
  314 (1999), pp.~237--574.

\bibitem{mcmillen2016ballistic}
{\sc T.~McMillen, J.~Xin, Y.~F. Yu, and A.~Zlatos}, {\em Ballistic orbits and
  front speed enhancement for {ABC} flows}, SIAM Journal on Applied Dynamical
  Systems, 15 (2016), pp.~1753--1782.

\bibitem{mezic1996maximal}
{\sc I.~Mezi{\'c}, J.~F. Brady, and S.~Wiggins}, {\em Maximal effective
  diffusivity for time-periodic incompressible fluid flows}, SIAM Journal on
  Applied Mathematics, 56 (1996), pp.~40--56.

\bibitem{Milstein:02}
{\sc G.~Milstein, Y.~Repin, and M.~Tretyakov}, {\em Symplectic integration of
  {H}amiltonian systems with additive noise}, SIAM J. Numer. Anal, 39 (2002),
  pp.~2066--2088.

\bibitem{Oksendal:13}
{\sc B.~Oksendal}, {\em Stochastic {D}ifferential {E}quations: an introduction
  with applications.}, Springer Science and Business Media, 2013.

\bibitem{Stuart:08}
{\sc G.~Pavliotis and A.~Stuart}, {\em Multiscale methods: averaging and
  homogenization}, Springer Science and Business Media, 2008.

\bibitem{StuartZygalakis:09}
{\sc G.~Pavliotis, A.~Stuart, and K.~Zygalakis}, {\em Calculating effective
  diffusivities in the limit of vanishing molecular diffusion}, J. Comput.
  Phys., 228 (2009), pp.~1030--1055.

\bibitem{Reich:99}
{\sc S.~Reich}, {\em Backward error analysis for numerical integrators}, SIAM
  J. Numer. Anal., 36 (1999), pp.~1549--1570.

\bibitem{tao2010nonintrusive}
{\sc M.~Tao, H.~Owhadi, and J.~Marsden}, {\em Nonintrusive and structure
  preserving multiscale integration of stiff {ODE}s, {SDE}s, and {H}amiltonian
  systems with hidden slow dynamics via flow averaging}, Multiscale Modeling \&
  Simulation, 8 (2010), pp.~1269--1324.

\bibitem{Zhongjian2018sharp}
{\sc Z.~Wang, J.~Xin, and Z.~Zhang}, {\em Sharp uniform in time error estimate
  on a stochastic structure-preserving {L}agrangian method and computation of
  effective diffusivity in {3D} chaotic flows}, to appear in SIAM Multiscale
  Model. Simul., arXiv:1808.06309,  (2018).

\bibitem{WangXinZhang:18}
{\sc Z.~J. Wang, J.~Xin, and Z.~W. Zhang}, {\em Computing effective diffusivity
  of chaotic and stochastic flows using structure-preserving schemes}, SIAM
  Journal on Numerical Analysis, 56 (2018), pp.~2322--2344.

\bibitem{xin2016periodic}
{\sc J.~Xin, Y.~Yu, and A.~Zlatos}, {\em Periodic orbits of the {ABC} flow with
  ${A}={B}={C}=1$}, SIAM Journal on Mathematical Analysis, 48 (2016),
  pp.~4087--4093.

\end{thebibliography}

\end{document}